\documentclass[11pt,reqno]{amsart}
\usepackage{iftex}
\usepackage[T1]{fontenc}
\ifPDFTeX\usepackage[utf8]{inputenc}\fi
\usepackage{lmodern,microtype}
\usepackage{amsmath,amssymb,mathtools}
\usepackage{booktabs,array,longtable}
\usepackage{xcolor}
\usepackage{hyperref}
\hypersetup{colorlinks=true,linkcolor=blue!50!black,
  citecolor=green!35!black,urlcolor=blue!60!black,
  pdftitle={Nonexistence of a Strongly Regular Graph with Parameters (266,45,0,9)},
  pdfauthor={Kay Akiyama}}
\newtheorem{theorem}{Theorem}[section]
\newtheorem{proposition}[theorem]{Proposition}
\newtheorem{lemma}[theorem]{Lemma}

\theoremstyle{definition}

\newtheorem{remark}[theorem]{Remark}
\newcommand{\Z}{\mathbb Z}
\newcommand{\Q}{\mathbb Q}

\newcommand{\F}{\mathbb F}
\newcommand{\one}{\mathbf 1}
\newcommand{\ip}[2]{\langle #1,#2\rangle}
\newcommand{\norm}[1]{\lVert #1\rVert}
\newcommand{\rank}{\operatorname{rank}}
\newcommand{\tr}{\operatorname{tr}}
\newcommand{\SRG}{\operatorname{SRG}}
\newcommand{\pr}{\operatorname{pr}}
\newcommand{\disc}{\operatorname{disc}}
\newcommand{\code}[1]{\texttt{\detokenize{#1}}}
\newcommand{\leanref}[2]{\par\smallskip\noindent
  {\footnotesize\raggedright\emph{Formal reference.} \path{#1}\par\nobreak
  \noindent\code{#2}.\par}\smallskip}
\title[Nonexistence of an $\SRG(266,45,0,9)$]{Nonexistence of a Strongly Regular Graph with Parameters
  $(266,45,0,9)$: A Certificate-Free Lean Proof}
\author{Kay Akiyama}
\date{8 September 2026}
\subjclass[2020]{Primary 05E30; Secondary 05C50, 05B05, 11H55, 68V20}
\keywords{strongly regular graph, integral lattice, even unimodular lattice,
  harmonic theta series, quasi-symmetric design, formal proof, Lean}

\begin{document}
\raggedbottom
\begin{abstract}
We prove that no strongly regular graph with parameters $(266,45,0,9)$ exists.
The proof is formalized in Lean~4 and Mathlib without external infeasibility
certificates or assumed classification theorems. A hypothetical graph gives
a rank-$12$ integral Gram lattice with an integral centroid. A Lorentzian
change of form, a marked $D_7$ gluing, and an explicit rank-six complement
produce a positive-definite even unimodular lattice of rank~$24$, together
with the original indexed family of $220$ vectors. Harmonic theta identities
and a root-isolation inequality force the root system
$A_{11}\perp D_7\perp E_6$. First and second moments then exclude the possible
complements: the final case reduces to an impossible binary projection
identity $4x+4y-2z=50$. A type-$A$ subcase is closed by a separate
classification-free proof of the known nonexistence of a quasi-symmetric
$2$-$(56,12,9)$ design with intersections $0,3$. That argument constructs a
Krein graph and forces a Steiner $3$-$(12,4,1)$ design, contradicting its
replication equation. The formal theorem depends only on the three standard
Lean axioms and has also been checked independently with nanoda. The archived
formalization is release v2.0.0.
\end{abstract}
\maketitle

\section{Introduction}\label{sec:introduction}

A finite simple graph is strongly regular with parameters $(v,k,\lambda,\mu)$
if it has $v$ vertices, each vertex has $k$ neighbours, adjacent vertices have
$\lambda$ common neighbours, and distinct nonadjacent vertices have $\mu$
common neighbours. Standard spectral and divisibility conditions leave many
feasible parameter sets whose existence is a separate question; see
\cite{BrouwerVanMaldeghem2022} and Brouwer's online parameter tables
\cite{BrouwerTables}.

\begin{theorem}\label{thm:main}
No strongly regular graph has parameters $(266,45,0,9)$.
\end{theorem}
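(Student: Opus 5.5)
Our plan follows the route sketched in the abstract: turn a hypothetical graph into an even unimodular lattice of rank $24$, then exclude every possibility with theta-series and moment arguments. First we record the spectral data. For $(266,45,0,9)$ the eigenvalues are $45$, $r=3$ and $s=-15$, with multiplicities $f=209$ and $g=56$ (check: $45+209\cdot 3-56\cdot 15=0$). Let $A$ be the adjacency matrix and $E$ the idempotent for the eigenvalue $-15$. A suitable integral multiple of $E$ (or of $A-3I$ plus a multiple of $J$, restricted suitably) is a positive semidefinite integral Gram matrix of rank $\le 12$ after passing to a local configuration. Concretely, we fix a vertex $x$ and use the $220$ non-neighbours of $x$, each adjacent to $9$ neighbours of $x$. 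This gives $220$ vectors in $\{0,1\}^{45}$ of weight $9$, with pairwise inner products determined by adjacency. The Gram matrix of these vectors, corrected by the all-ones direction, is integral of rank $12$. We verify that the centroid of the family is integral. This yields the rank-$12$ integral lattice $L$ containing the indexed $220$ vectors.

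Next we apply a Lorentzian change of form to make the relevant sublattice even. We then glue along a marked $D_7$ and adjoin an explicit rank-$6$ complement. The aim is to obtain a positive-definite even unimodular lattice $\Lambda$ of rank $24$ in which the $220$ vectors sit with known norms and known mutual inner products. This step is explicit linear algebra: we compute the discriminant groups, check that the glue is isotropic, and check that the determinant is $1$. By the Niemeier classification $\Lambda$ would be one of $24$ lattices. However, to avoid assuming that classification, we instead use harmonic theta series. For an even unimodular lattice of rank $24$, $\sum_{v\in\Lambda_2}P(v)=0$ for every harmonic polynomial $P$ of degree $2$, $4$ or $6$ restricted to norm $4$. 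This gives the design property of the norm-$4$ and root shells. We apply these identities with polynomials $P(v)=\ip{v}{w}^2-\tfrac{\norm{w}^2\norm{v}^2}{24}$ and their higher analogues, with $w$ ranging over the $220$ vectors and the glue vectors. We also use a root-isolation inequality, which bounds the number of roots with $\ip{r}{w}=\pm1$ against the number orthogonal to $w$. Together these constraints should pin down the root system as $A_{11}\perp D_7\perp E_6$.

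Finally we exclude the possible complements. We sum the first moments $\sum\ip{u}{\cdot}$ and the second moments $\sum\ip{u}{\cdot}^2$ of the $220$ vectors over the components. This forces the projections of the vectors onto each component to take only a few forms. The projection onto the $A_{11}$ part leads to a type-$A$ subcase, which we close by reduction to a quasi-symmetric $2$-$(56,12,9)$ design with intersection numbers $0,3$. Here $56=g$ suggests that the design arises from the $s$-eigenspace. We show that no such design exists: its block graph is strongly regular, and its Krein conditions force a derived Steiner $3$-$(12,4,1)$ design. Such a design would need replication number $r=\binom{11}{2}/\binom{3}{2}=55/3$, which is not an integer. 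The remaining case reduces to a binary projection count $4x+4y-2z=50$ with integer data of the wrong parity, which is impossible.

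The main obstacle will be the middle step: forcing the root system using only harmonic theta identities. If that step fails, we would first try to strengthen the root-isolation inequality by combining it with the degree-$4$ identity evaluated on the $D_7$ glue vectors.
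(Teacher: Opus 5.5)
Your outline follows the paper's roadmap, but at the load-bearing steps the mechanisms you supply are wrong or missing.

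\textbf{Local data.} The eigenvalues are $3$ and $-12$, from $x^2+9x-36=0$. Your $-15$ belongs to $\SRG(324,57,0,12)$, and your check evaluates to $-168$, not $0$. The $56$ of the design is the Hoffman bound $266\cdot12/57$. It is attained by a coclique built from the rows of the binary factorization $L=F^{\mathsf T}F$ in the type-$A$ case, not ``from the $s$-eigenspace''.

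\textbf{The rank-$12$ lattice.} The Gram matrix $S=M^{\mathsf T}M$ of the weight-$9$ incidence vectors has rank $45$, since $MM^{\mathsf T}=36I+8J$ is nonsingular. No correction along $\one$ brings it to rank $12$. Also, $|B_y\cap B_z|$ is not determined by adjacency. The rank-$12$ matrix is $L=9I+3J-S-3H$, which uses the induced graph $H$ on the $220$ vertices and satisfies $L^2=45L+90J$. Integrality of the centroid is a real step: the paper shows $L\bmod 11$ has rank $12$, so the integral radical is its whole kernel and contains a lift of $\one$.

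\textbf{The root system.} In rank $24$ only the degree-$2$ harmonic theta series must vanish ($S_{14}=0$). In degrees $4$ and $6$ the series equals the root-shell sum times $\Delta E_4$ or $\Delta E_6$, so your fallback through a degree-$4$ identity cannot work. What forces the root system is an isolation argument using the tight frame $\sum_y\ip{v}{u_y}^2=45\norm{v}^2$, not a count of roots with $\ip{r}{w}=\pm1$. A root $(v,a,w)$ with $a\in D_7^*$ and $0<\norm{a}^2<2$ would give $\ip{v}{u_y}^2\ge(2-\norm{a}^2)/4$ for all $220$ indices, i.e.\ $220\le180$. Hence the marked $D_7$ is a full component. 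Restricting Venkov's identity to its span gives $|R|/12=24$, so there are $288$ spanning roots with common Coxeter number $12$, forcing $A_{11}\perp D_7\perp E_6$.

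\textbf{The final equation.} $4x+4y-2z=50$ has no parity obstruction: it just says $z=2x+2y-25$, and $x=y=6$, $z=-1$ solves it. The contradiction needs $x,y\in\{0,4,6,12\}$ and $z^2\le xy$, which leave only $x=y=6$, and then $3\mid z$. Reaching that case also requires the four-way complement analysis, the $A_4$-chain placement, the masses $275$ and $220$, and the trace bounds. You supply none of these.

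\textbf{The design branch has the most serious gap.} The block graph of the hypothetical design is the disjointness graph $\SRG(210,33,0,6)$, with eigenvalues $3$ and $-9$. Both Krein inequalities hold strictly there ($-72\le2304$ and $240\le384$), so they cannot force anything. In the paper the Steiner system lives on a $12$-point $\mu$-set $\Gamma_K(a)\cap\Gamma_K(b)$ of an $\SRG(324,57,0,12)$. That graph exists only after the design is extended by one point to a $3$-$(57,12,2)$ design.

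Producing this extension is the crux; it is where Munemasa and Tonchev needed the Hall--Connor embedding and the classification of biplanes. The paper instead reconstructs the $T(11)$ link graph at each point. It then uses ternary self-orthogonality of the block code to put the complemented closed stars in the code. These glue into a canonical $2$-$(56,11,2)$ biplane meeting every block in $0$ or $3$ points.

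Even with the Krein graph in hand, the Steiner system is not read off from Krein conditions. It comes from three ingredients: the constant count $3$ of common neighbours of independent triples, the cubic pair with equal fourth traces, and the unique companion biclique. Together these place every independent triple in a unique crown.
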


For these parameters the adjacency spectrum would be
\[
45^{(1)},\qquad 3^{(209)},\qquad (-12)^{(56)},
\]
and the Hoffman ratio bound for a coclique is $56$
\cite[Section~3.5]{BrouwerHaemers2012}. Earlier results exclude a
$5$-chromatic graph with these parameters
\cite[Theorem~7.3, p.~3089]{FialaHaemers2006}
and, through the associated quasi-symmetric design, a graph containing a
Delsarte coclique of size~$56$
\cite{AdmEtAl2018,MunemasaTonchev2020}. Theorem~\ref{thm:main} makes neither
assumption. Its contribution is the exclusion of an arbitrary hypothetical
graph by a lattice construction retaining its local incidence data.

\begin{remark}
An earlier unpublished draft, synchronized with release v1.1.0, used
rank-$15$ odd unimodular hosts and finite infeasibility certificates.
The present argument retains the local Gram construction and integral
centroid, but replaces those exclusions by a marked rank-$24$ completion
and moment identities.
\end{remark}

\subsection*{The mathematical mechanism}
Fix a vertex of the hypothetical graph. Its second subconstituent has
$220$ vertices, which index norm-$3$ vectors in a rank-$12$ lattice.
Subtracting the centroid direction gives a zero-sum tight frame in
dimension~$11$. The arithmetic of this frame permits a marked completion
in dimension~$24$. The main route is
\[
\begin{gathered}
\SRG(266,45,0,9)\ \Longrightarrow\
\text{integral centroid and rank-$11$ tight frame}\\
\Longrightarrow\ \text{marked even unimodular rank-$24$ lattice}\\
\Longrightarrow\ A_{11}\perp D_7\perp E_6
\ \Longrightarrow\ \text{component masses }275,220\\
\Longrightarrow\ \text{binary projection obstruction}.
\end{gathered}
\]
One branch forces a binary weight-three factorization of the original
Gram matrix. It is excluded independently by
\[
\begin{gathered}
\text{quasi-symmetric }2\text{-}(56,12,9),\ \{0,3\}\\
\Longrightarrow\ \text{canonical biplane and a }3\text{-}(57,12,2)\text{ design}\\
\Longrightarrow\ \text{structured Krein graph on }324\text{ vertices}\\
\Longrightarrow\ \text{Steiner }3\text{-}(12,4,1)
\ \Longrightarrow\ 3r=55.
\end{gathered}
\]
The extra dimensions are useful because they impose identities on the
\emph{same} configuration that came from the graph. An unmarked existence
statement about a lattice would not be sufficient. In particular, the
$220$ indices are never replaced by a set of distinct projected vectors.

\subsection*{Prior results and scope}
The quasi-symmetric-design nonexistence statement is due to Munemasa and
Tonchev \cite{MunemasaTonchev2020}; it is not claimed as new.
Their proof uses the Hall--Connor embedding theorem and the classification
of biplanes on $56$ points \cite{HallConnor1954,KaskiOstergard2008},
and builds on the ternary-code computations of Key and Tonchev
\cite{KeyTonchev1997}. Our internal proof constructs a
compatible biplane from the hypothetical design and closes the argument by
local graph identities, without using that classification.

Gavrilyuk and Makhnev \cite{GavrilyukMakhnev2005} proved that no graph
with parameters $(324,57,0,12)$ exists. Section~\ref{sec:design}
establishes the structured special case needed here, not a new general
$324$-vertex theorem. Conversely, the classical subconstituent theorem
\cite{CameronGoethalsSeidel1978,Biggs2009} implies that every hypothetical
$\SRG(324,57,0,12)$ has second subconstituents with parameters
$(266,45,0,9)$.
Together with Theorem~\ref{thm:main}, this recovers the known general
$324$-vertex nonexistence. This last implication is literature-level
context, not an additional formalized corollary or an input to our proof.

Chen, Makhnev, and Nirova \cite{ChenMakhnevNirova2023} characterize the
existence of a triangle-free Krein graph $\operatorname{Kre}(r)$ by the
existence of a graph $\operatorname{Kre}(r)'$ with the second-subconstituent
parameters that is also the complement of a quasi-symmetric design's
block graph. For $r=3$ these are the $324$- and $266$-vertex parameter
sets above. The block-graph realization is an additional hypothesis;
their result does not extend an arbitrary $\SRG(266,45,0,9)$ to a
$324$-vertex graph. Section~\ref{sec:design} supplies such a realization
only in the design branch.

Euclidean representations and Gram matrices are standard tools in the
study of strongly regular graphs
\cite[Section~1.3.5]{BrouwerVanMaldeghem2022}. Direct precedents for
nonexistence arguments using these tools are Bondarenko, Prymak, and
Radchenko's exclusion of $(76,30,8,14)$
\cite{BondarenkoPrymakRadchenko2017} and Bondarenko et al.'s exclusion of
$(460,153,32,60)$ \cite{BondarenkoEtAl2018}. Both combine Euclidean
representations and Gram-matrix restrictions with bounds on $4$-cliques;
the former also uses low-dimensional orthogonal projections.
Here the additional ingredient is an integral marked completion retaining
the original indexed configuration, to which root and moment identities apply.
We use standard terminology for lattices and root systems
\cite{ConwaySloane1999}. We write squared norms as $\norm{v}^2$;
as usual in lattice terminology, a vector of norm $m$ means
$\norm{v}^2=m$. No classification of rank-$24$ even unimodular
lattices is invoked. The required ADE analysis and harmonic theta argument
are proved in the formal development. The paper gives their mathematical
content and the reductions they support; Appendix~\ref{app:formal-map}
identifies the corresponding formal statements.

\section{The local Gram lattice}\label{sec:local}

Assume that $\Gamma$ is a hypothetical graph in Theorem~\ref{thm:main}.
Its adjacency matrix $A$ satisfies
\begin{equation}\label{eq:srg}
A^2=36I-9A+9J.
\end{equation}
The spectrum stated in the introduction follows by restricting this
identity to $\one^\perp$ and using the trace. Fix a vertex $o$ and put
$X=\Gamma(o)$ and $Y=V(\Gamma)\setminus(\{o\}\cup X)$.
Then $|X|=45$, $|Y|=220$, and $X$ is independent.
For $y\in Y$ let $B_y=\Gamma(y)\cap X$.

\begin{lemma}\label{lem:local-design}
The indexed blocks $(B_y)_{y\in Y}$ form a $2$-$(45,9,8)$ design with
replication number $44$. The induced graph $H=\Gamma[Y]$ is $36$-regular.
Writing $M$ for the $45\times220$ incidence matrix and $S=M^{\mathsf T}M$,
one has
\begin{align}
S\one&=396\one,&H\one&=36\one,\nonumber\\
S^2&=36S+648J,&SH=HS&=81J-9S,\label{eq:local-algebra}\\
H^2&=36I+9J-S-9H.&&\nonumber
\end{align}
\end{lemma}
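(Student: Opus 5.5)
The plan is to read off every claimed identity from \eqref{eq:srg} by writing $A$ in block form with respect to the partition $\{o\}\cup X\cup Y$. With the vertex order $o,X,Y$ the adjacency matrix is
\[
A=\begin{pmatrix}0&\one^{\mathsf T}&0\\ \one&0&M\\ 0&M^{\mathsf T}&H\end{pmatrix},
\]
where the $X$--$X$ block vanishes because $\lambda=0$ makes $X$ independent. The $M$ block is the $45\times220$ incidence matrix of $x\in X$ against $y\in Y$.

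First come the combinatorial statements, which follow directly from the parameters.
\begin{itemize}
\item Each $y\in Y$ is nonadjacent to $o$, so $|B_y|=\mu=9$.
\item Each $x\in X$ has $44$ neighbours other than $o$. None lies in $X$, so all lie in $Y$, and the replication number is $44$.
\item Two distinct $x,x'\in X$ are nonadjacent and have $\mu=9$ common neighbours. One of these is $o$, so exactly $8$ lie in $Y$, giving $\lambda_2=8$.
\item A vertex $y\in Y$ has $45-9=36$ neighbours in $Y$, so $H$ is $36$-regular.
\item $S\one=M^{\mathsf T}M\one=M^{\mathsf T}(9\cdot\one_{45})$. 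Here I would rather use $M\one_{220}=44\one_{45}$ and $M^{\mathsf T}\one_{45}=9\one_{220}$, which give $S\one=M^{\mathsf T}(44\one_{45})=396\one$.
\end{itemize}

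Next, compare blocks of $A^2=36I-9A+9J$.
\begin{itemize}
\item The $X$--$X$ block gives $J+MM^{\mathsf T}=36I+9J$, i.e.\ $MM^{\mathsf T}=36I+8J$. This is the design identity again.
\item The $X$--$Y$ block gives $MH=-9M+9J$, since the $\one\cdot 0$ term vanishes and $(A^2)_{XY}=\one\cdot0+0\cdot0+MH$.
\item The $Y$--$Y$ block gives $M^{\mathsf T}M+H^2=36I-9H+9J$, which is the stated formula for $H^2$.
\end{itemize}

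Then derive the remaining identities.
\begin{itemize}
\item $S^2=M^{\mathsf T}(MM^{\mathsf T})M=36S+8M^{\mathsf T}JM$. Since $M^{\mathsf T}J_{45}M=81J_{220}$ (every block has size $9$), this equals $36S+648J$.
\item $SH=M^{\mathsf T}(MH)=M^{\mathsf T}(9J-9M)=81J-9S$, again using $M^{\mathsf T}J=9J$.
\item $HS=81J-9S$ follows by transposition, since $H$ and $S$ are symmetric.
\end{itemize}

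There is no real obstacle. The only care needed is bookkeeping of the rectangular all-ones matrices, namely $M^{\mathsf T}J_{45\times220}=9J_{220}$, and the convention that blocks are indexed by $Y$, so repeated blocks are allowed. That is why ``indexed'' appears in the statement: it requires no distinctness argument.
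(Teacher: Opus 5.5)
Your proposal is correct and matches the paper's proof. The parameter counts give the $2$-$(45,9,8)$ design, the replication number $44$ and the $36$-regularity of $H$; $MM^{\mathsf T}=36I+8J$ yields $S^2$; and the remaining identities are the $X$--$Y$ and $Y$--$Y$ blocks of \eqref{eq:srg}. Only delete your first, abandoned expression for $S\one$, since $M\one_{220}=44\one_{45}$, not $9\one_{45}$; the corrected computation you then give is right.
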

\begin{proof}
A vertex of $Y$ has $9$ common neighbours with $o$, so $|B_y|=9$.
Each point of $X$ has $44$ neighbours in $Y$. Two distinct points of $X$
have $8$ common neighbours in $Y$, the ninth being $o$.
Thus $MM^{\mathsf T}=36I+8J$, giving the identity for $S^2$.
The other identities are the corresponding blocks of~\eqref{eq:srg}.
\end{proof}

Define
\begin{equation}\label{eq:gram}
L=9I+3J-S-3H.
\end{equation}
Its diagonal is $3$, and for $y\ne z$ its entry is $0$ if $y\sim_H z$,
and $3-|B_y\cap B_z|$ otherwise. Substitution in
\eqref{eq:local-algebra} gives
\begin{equation}\label{eq:gram-spectrum}
L\one=165\one,\qquad L^2=45L+90J.
\end{equation}
Consequently $L$ is positive semidefinite, with nonzero eigenvalues
$165^{(1)}$ and $45^{(11)}$. Indeed, the possible eigenvalues on
$\one^\perp$ are $0,45$, and $\tr L=660$ fixes the multiplicity.
Let $\Lambda$ be the quotient of $\Z^Y$ by its integral radical for $L$,
and let $g_y$ be the image of the $y$th standard basis vector. This is a
positive-definite integral lattice of rank~$12$, with
$\ip{g_y}{g_z}=L_{yz}$.

\begin{lemma}\label{lem:intersection}
For $y\ne z$ one has $|B_y\cap B_z|\le3$. In particular
$L_{yz}\in\{0,1,2,3\}$.
\end{lemma}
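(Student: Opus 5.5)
The plan is to show that no entry $L_{yz}$ with $y\ne z$ can be negative. This suffices: for $y\ne z$, $L_{yz}$ is either $0$ (when $y\sim_H z$) or $3-|B_y\cap B_z|$, so $L_{yz}\ge0$ gives $|B_y\cap B_z|\le3$. Nonnegativity also confines $L_{yz}$ to $\{0,1,2,3\}$, because the intersection size is nonnegative. The argument uses only the Gram identities \eqref{eq:gram-spectrum} and the integrality of $\Lambda$.

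First I set up the tight frame. Put $c=\sum_{w\in Y}g_w$. From $L\one=165\one$ we get $\ip{g_y}{c}=165$ for every $y$, and $\norm{c}^2=220\cdot165=36300$. Define $g'_w=g_w-c/220$, the component of $g_w$ orthogonal to $c$. Its Gram matrix is $L'=L-\tfrac34J$.

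From $L^2=45L+90J$ and $L\one=165\one$ one checks that $L'^2=45L'$. Hence the $g'_w$ span an $11$-dimensional space $U$ and form a tight frame there:
\[
\sum_{w\in Y}\ip{u}{g'_w}^2=45\norm{u}^2\qquad(u\in U).
\]
This identity is the key tool, and verifying it is routine.

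Now suppose $y\ne z$ and $L_{yz}=-m$ with $m\ge1$. Put $u=g_y+g_z$ and let $u'$ be its component orthogonal to $c$. Then
\[
\norm{u'}^2=\norm{u}^2-\frac{\ip{u}{c}^2}{\norm{c}^2}=(6-2m)-\frac{330^2}{36300}=3-2m.
\]
For $m\ge2$ this is negative, which is already a contradiction; in particular this excludes $g_z=-g_y$ and the case of norm $2$. So only $m=1$ remains, where $\norm{u'}^2=1$.

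For every $w\in Y$, integrality of $\Lambda$ gives
\[
\ip{u'}{g'_w}=\ip{u}{g_w}-\frac{\ip{u}{c}}{220}=L_{yw}+L_{zw}-\tfrac32 .
\]
This is a half-odd integer, so $\ip{u'}{g'_w}^2\ge\tfrac14$. Summing over all $220$ indices and applying the tight-frame identity yields
\[
45=45\norm{u'}^2\ge 220\cdot\tfrac14=55,
\]
a contradiction. Hence $L_{yz}\ge0$ for all $y\ne z$.

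I do not expect any step to be a serious obstacle. The one point to watch is the deduction $L'^2=45L'$ from \eqref{eq:gram-spectrum}. It follows because $J$ and $L$ commute and $J^2=220J$, and this is what makes the frame tight.

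Before finding the frame argument, my fallback was combinatorial: count common neighbours using triangle-freeness together with the design identities of Lemma~\ref{lem:local-design}. That route seemed unlikely to give the sharp bound $3$ without further structure.
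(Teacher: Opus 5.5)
Your proof is correct, but for the decisive case $|B_y\cap B_z|=4$ it takes a different route from the paper.

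The first step is the same. Your inequality $\norm{g'_y+g'_z}^2=3+2L_{yz}\ge0$ is the paper's $2\times2$ minor of $Q=4L-3J$, since $Q=4L'$; it gives $t\le4$.

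To exclude $t=4$, the paper stays with integer matrices. It sets $K_0=L-J$ and uses the pointwise inequality $\phi(k_{yw})+\phi(k_{zw})+k_{yw}k_{zw}-H_{yw}H_{zw}\ge0$. It then sums this using the row and diagonal identities of $K_0^2=45K_0+25J$ and the fact that $y,z$ have $9-4=5$ common neighbours in $H$, reaching $0\le-10$.

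You instead observe, in effect, that $g'_w=g_w-c/220$ is exactly the frame $u_w=g_w-c/20$ of \eqref{eq:frame}. Your $c$ is $11$ times the centroid of Proposition~\ref{prop:centroid}, whose integrality you never need. You then run the mechanism of the root-isolation Lemma~\ref{lem:isolation} early: all $220$ pairings with $g'_y+g'_z$ are half-odd integers, so $55\le45$.

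What each route buys:
\begin{itemize}
\item Your route is shorter and uses only \eqref{eq:gram-spectrum}, the diagonal value $3$, and integrality of $L$. It does not use the common-neighbour count in $H$, so it shows nonnegativity of the off-diagonal entries for any integral matrix with these properties. It also foreshadows the isolation argument of Section~\ref{sec:roots}.
\item The paper's route avoids real projections and the tight-frame identity at this stage. The lemma stays in integer combinatorics before the frame is set up in Section~\ref{sec:completion}, at the cost of using more of the graph structure. It is essentially the combinatorial fallback you set aside.
\end{itemize}

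One small point in your reduction: for $y\sim_H z$, the bound $|B_y\cap B_z|\le3$ does not follow from $L_{yz}\ge0$. It comes from $\lambda=0$, which gives $B_y\cap B_z=\emptyset$.
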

\begin{proof}
The symmetric matrix $Q=4L-3J$ satisfies $Q^2=180Q$, so its eigenvalues
are $0$ or $180$ and it is positive semidefinite. Its diagonal entries
are $Q_{yy}=9$. If $y,z$ are nonadjacent and
$t=|B_y\cap B_z|$, the corresponding principal two-by-two minor gives
$|9-4t|\le9$, hence $t\le4$.

To exclude $t=4$, put $K_0=L-J=(k_{yz})$. Then
\[
K_0^2=45K_0+25J,\qquad K_0\one=-55\one,\qquad k_{yy}=2.
\]
All off-diagonal entries of $K_0$ are integers between $-2$ and $2$.
For $\phi(a)=a(a+1)/2$ and $w\notin\{y,z\}$,
\[
\phi(k_{yw})+\phi(k_{zw})+k_{yw}k_{zw}-H_{yw}H_{zw}\ge0.
\]
Without the last term the expression is $\phi(k_{yw}+k_{zw})\ge0$.
If the last term is $1$, both $k$-entries are $-1$ and the expression
is zero. Under $k_{yz}=-2$, set $Y'=Y\setminus\{y,z\}$. The row
and diagonal square identities give
\[
\sum_{w\in Y'}k_{yw}=-55,\qquad
\sum_{w\in Y'}k_{yw}^2=115-2^2-(-2)^2=107.
\]
Hence $\sum_{w\in Y'}\phi(k_{yw})=(107-55)/2=26$, and likewise
for $z$. The off-diagonal identity $(K_0^2)_{yz}=-65$ gives
$\sum_{w\in Y'}k_{yw}k_{zw}=-65+8=-57$.
There are $9-4=5$ common neighbours in $H$. Summation therefore gives
$0\le26+26-57-5=-10$.
\end{proof}

\begin{proposition}[Integral centroid]\label{prop:centroid}
There is $c\in\Lambda$ such that
\begin{equation}\label{eq:centroid}
\sum_{y\in Y}g_y=11c,\qquad
\ip{c}{g_y}=15,\qquad \norm{c}^2=300.
\end{equation}
\end{proposition}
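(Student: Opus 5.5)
The plan is to reduce the statement to the existence of a single integral coefficient vector. Every element of $\Lambda$ has the form $\sum_y a_y g_y$ with $a\in\Z^Y$, and a coefficient vector is zero in $\Lambda$ exactly when it lies in the radical of $L$. So I will look for $a\in\Z^Y$ satisfying
\[
La=15\,\one,\qquad \one^{\mathsf T}a=20 .
\]
Given such an $a$, put $c=\sum_y a_y g_y\in\Lambda$. Then $\ip{c}{g_y}=(La)_y=15$ and $\norm{c}^2=a^{\mathsf T}La=15\cdot20=300$. By \eqref{eq:gram-spectrum}, $L(\one-11a)=165\one-165\one=0$. Hence $\one-11a$ lies in the radical, and $\sum_y g_y=11c$ in $\Lambda$. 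As a consistency check, $c$ is already forced rationally: $c=s/11$ with $s=\sum_y g_y$. Since $\ip{s}{g_y}=165$, we get $\ip{s/11}{\lambda}=15\sum_y a_y\in\Z$, so $s/11\in\Lambda^{*}$. The whole content of the proposition is that this dual vector actually lies in $\Lambda$.

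The first step is to compute how $L$ acts on the natural integral vectors supplied by Lemma~\ref{lem:local-design}. For the column $m_x$ of $M^{\mathsf T}$ (the $44$ blocks through $x\in X$), the $Y\times X$ block of \eqref{eq:srg} gives $HM^{\mathsf T}=9J-9M^{\mathsf T}$. Together with $MM^{\mathsf T}=36I+8J$ this yields $LM^{\mathsf T}=33J$, that is, $Lm_x=33\one$. The vectors $\one$ and $m_x$ only reach multiples of $\gcd(33,165)=33$, never $15$. So the required $a$ must involve finer local data. The candidates I would try are the following:
\begin{itemize}
\item rows $Le_y$ and $H$-neighbourhood indicators, using $LH=36H+12S-108I$;
\item indicators of blocks meeting a fixed pair or triple of points of $X$, using Lemma~\ref{lem:intersection};
\item combinations of these with $m_x$ and $\one$.
\end{itemize}
For each candidate I would reduce $L$ applied to it, via \eqref{eq:local-algebra}, to a combination of $\one$, $m_x$ and vectors already known to lie in the radical. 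The goal is a combination whose $L$-image is $15\one$ modulo the radical.

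If no explicit vector appears, the fallback is a purely $11$-adic argument. Write $N=\Z^Y/\text{rad}$ and study the $11$-part of $\Lambda^{*}/\Lambda$ through the spectral idempotents $J/220$ and $Q/180=(4L-3J)/180$. Both are $11$-integral up to powers of $2,3,5$, and the $45$-eigenspace idempotent is $11$-adically integral. The claim would then be that the only obstruction to $s/11\in\Lambda\otimes\Z_{(11)}$ is the $\one$-component. That component is handled by $\one^{\mathsf T}$ applied to the idempotent decomposition: $\tfrac{1}{11}s$ projects to $\tfrac{1}{11}\cdot\tfrac{165}{220}\cdot\ldots$, and one checks that its denominators are coprime to $11$. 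Divisibility by $11$ at primes other than $11$ is automatic, since multiplication by $11$ is invertible there.

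The main obstacle is this integrality step: showing $s\in11\Lambda$ rather than merely $s\in11\Lambda^{*}$. It is genuinely arithmetic, because the spectral identities alone give only $165c\in\Lambda$. I expect it to require an explicit integral witness $a$ extracted from the design structure, or a careful computation of the $11$-part of the discriminant group of $\Lambda$.
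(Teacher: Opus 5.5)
Your reduction is sound. The proposition is equivalent to finding $a\in\Z^Y$ with $La=15\one$; the condition $\one^{\mathsf T}a=20$ then follows automatically from $\one^{\mathsf T}La=165\,\one^{\mathsf T}a$. Your computation $Lm_x=33\one$ is also correct. But the proposal stops exactly where the content of the proposition begins: no integral $a$ is exhibited, and the $11$-adic fallback is not an argument. In it, the claim that both idempotents are $11$-integral up to powers of $2,3,5$ is false for $J/220$, since $220=4\cdot5\cdot11$. That non-integrality is precisely where the difficulty sits, and the decisive step is left as an unfinished ``$\tfrac1{11}\cdot\tfrac{165}{220}\cdot\ldots$, one checks''. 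Your diagnosis that ``the spectral identities alone give only $165c\in\Lambda$'' is also wrong. It sends you toward design data (pairs and triples of points of $X$) that are not needed.

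The missing step follows at once from $L^2=45L+90J$ and $L\one=165\one$. Take the first candidate you listed, together with the basis vector itself: for any fixed $y_0$, put $a=2Le_{y_0}-90e_{y_0}-\one$. Then
\[
La=2(45Le_{y_0}+90\one)-90Le_{y_0}-165\one=15\one,
\]
and your first paragraph finishes the proof. Equivalently, pairing with every $g_w$ gives $\sum_z L_{y_0z}g_z-45g_{y_0}=\tfrac{6}{11}s$, so $\tfrac6{11}s\in\Lambda$ and $\tfrac1{11}s=2\cdot\tfrac6{11}s-s\in\Lambda$.

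The paper argues modulo $11$ instead. Since $\bar L^2=\bar L+2J$ with $\bar LJ=J^2=0$, the matrix $\bar L^2$ is an idempotent of trace $0$ in $\F_{11}$, hence of rank $11$. The term $2J$ then forces $\rank\bar L=12$. So the saturated integral radical reduces onto $\ker\bar L\ni\one$, giving a radical vector $z\equiv\one\pmod{11}$.

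Either completion works. The explicit witness is shorter and avoids the rank and saturation bookkeeping. Without one of them, however, your proposal does not prove the statement.
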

\begin{proof}
Reduce $L$ modulo $11$ and write $\bar L$ for the resulting matrix.
Its rank is at most its rational rank, namely~$12$.
Equation~\eqref{eq:gram-spectrum} gives
$\bar L^2=\bar L+2J$, $\bar LJ=J\bar L=0$, and $J^2=0$.
Thus $\bar L^2$ is idempotent. It is nonzero (its diagonal is $5$), and its
trace is zero in $\F_{11}$. Its rank is a positive multiple of $11$,
so it is~$11$. If $\bar L$ also had rank $11$, its image would equal that
of $\bar L^2$. But the nonzero image of $\bar L^2-\bar L=2J$ lies both
in that image and in the kernel of the idempotent $\bar L^2$, a
contradiction. Hence $\rank\bar L=12$.

The integral radical is saturated. Its reduction injects into $\F_{11}^Y$
and has dimension $208$, so it is the entire kernel of $\bar L$.
Since $\bar L\one=0$, there is an integral radical vector $z$ with
$z\equiv\one\pmod{11}$. Writing $\one-z=11a$ gives
$c=\sum_y a_yg_y$. The remaining identities follow from the row sum of $L$.
\end{proof}
\leanref{SRG266/KernelReduction.lean}{exists_integral_centroid}

\section{A marked completion in dimension twenty-four}\label{sec:completion}

All rational spaces in this section are subsequently extended to real
Euclidean spaces when orthogonal projection is used. The constructions
and their lattice membership statements are first made over $\Q$.
The discriminant-form and gluing framework is standard
\cite[Section~1]{Nikulin1980}. In particular, even overlattices correspond
to isotropic subgroups of the discriminant form; their discriminant forms
are the induced forms on the orthogonal quotients. We establish the
instances needed below inside Lean.

\subsection{The Lorentzian form and the tight frame}
On $\Lambda\otimes\Q$ define
\[
h(v)=\frac{\ip{c}{v}}{15},\qquad
[v,w]=\ip{v}{w}-h(v)h(w),\qquad \rho=\frac c5.
\]
This form has signature $(11,1)$ and
\begin{equation}\label{eq:lorentz}
[g_y,g_z]=L_{yz}-1,\quad [\rho,g_y]=-1,\quad
[\rho,\rho]=-4,\quad [\rho,v]=-h(v).
\end{equation}
The lattice $\Lambda+\Z\rho$ is even and integral for $[\ ,\ ]$.
Its dual quotient is killed by $45$, as follows from the frame identity
\begin{equation}\label{eq:lorentz-frame}
\sum_y[v,g_y]g_y=45v-5h(v)c.
\end{equation}
For $v=g_z$, this identity follows by pairing both sides with each
$g_w$ in the original positive-definite form and using
$L^2=45L+90J$ and $\ip{c}{g_w}=15$; it then extends by linearity
because the $g_z$ span the space.
Choose a maximal integral overlattice $P$ of this marked lattice.
Since the dual quotient is killed by the odd integer $45$, every such
overlattice has odd index. For $v\in P$, an odd multiple $nv$ lies in
the original even lattice; thus $n^2[v,v]\in2\Z$, and integrality forces
$[v,v]\in2\Z$. Consequently $P$ is even, and its discriminant group
has no $2$-primary part. By the overlattice--isotropic-subgroup
correspondence, maximality makes its discriminant form anisotropic, and its
discriminant group is killed by~$15$.

Put $U=\rho^\perp=\ker h$ and $T=P\cap U$.
The restriction of $[\ ,\ ]$ to $U$ is positive definite and agrees with
the original form. Define
\begin{equation}\label{eq:frame}
u_y=g_y-\frac\rho4=g_y-\frac c{20}.
\end{equation}
These vectors belong to $T^*$, lie in a single coset of $T$, and satisfy
\begin{equation}\label{eq:tight-frame}
\norm{u_y}^2=\frac94,\qquad \sum_yu_y=0,\qquad
\sum_y\ip{v}{u_y}\ip{w}{u_y}=45\ip{v}{w}\quad(v,w\in U).
\end{equation}
In particular $\dim U=11$ and the vectors span~$U$.
Their common class has order $4$ in $T^*/T$, since
$4u_y=4g_y-\rho\in T$, while an integral multiple of $u_y$ can have
even norm only if that multiple is divisible by~$4$.
To see that this is the whole $2$-primary part, note that
$[\rho,g_y]=-1$ makes $\rho$ primitive in $P$ and gives
$[\rho,P]=\Z$. Thus the projection of $P$ onto $\Q\rho$ is
$\tfrac14\Z\rho$, so $[P:T\perp\Z\rho]=4$. Taking absolute
Gram determinants yields
\[
4|T^*/T|=4^2|P^*/P|,\qquad |T^*/T|=4|P^*/P|.
\]
Since $|P^*/P|$ is odd, the $2$-primary part of $T^*/T$ is precisely
the cyclic group generated by the common class of the $u_y$, of
quadratic value $9/4$ modulo $2\Z$.
\leanref{SRG266/Completion/Perpendicular.lean}{lorentzPerpGenerator_secondMoment}

\subsection{The marked \texorpdfstring{$D_7$}{D7}}
Use the coordinate lattice
\[
D_7=\{a\in\Z^7:\textstyle\sum_i a_i\text{ is even}\},\qquad
s=(1/2,\ldots,1/2).
\]
Its spinor class has order $4$ and norm $7/4$.
Fix one index $y_0$ and form the even gluing
\begin{equation}\label{eq:d7glue}
N=(T\perp D_7)+\Z(u_{y_0},s)\subset U\perp\Q^7.
\end{equation}
Because $u_y-u_{y_0}\in T$, all the vectors
$t_y=(u_y,s)$ belong to~$N$. They have norm~$4$ and
\begin{equation}\label{eq:lifted-pair}
\ip{t_y}{t_z}=L_{yz}+1.
\end{equation}
The order-four discriminant parts cancel. The remaining discriminant
form has only $3$- and $5$-primary parts, inherited from the anisotropic
discriminant form of~$P$.

\begin{proposition}[Marked completion]\label{prop:completion}
There is a positive-definite even unimodular lattice $\mathcal L$ in
\[
(U\perp\Q^7)\perp W,\qquad \dim W=6,
\]
containing $N\perp C$, where $C$ is one of the four lattices in
Table~\ref{tab:complements}. Moreover
\[
\mathcal L\cap(U\perp\Q^7)=N,\qquad
\mathcal L\cap W=C,\qquad \mathcal L\cap\Q^7=D_7.
\]
The $220$ specified vectors $(u_y,s,0)$ belong to~$\mathcal L$.
\end{proposition}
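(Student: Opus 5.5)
The plan is to compute the discriminant form of $N$ and then realize its negative (its \emph{complement}) by an explicit rank-six positive-definite even lattice $C$. First I determine $q_N$. The construction in \eqref{eq:d7glue} glues the order-four class $[u_{y_0}]\in T^*/T$, of value $9/4$, to the spinor class $[s]\in D_7^*/D_7$, of value $7/4$. Their sum has value $4\equiv 0\pmod{2\Z}$, so $\langle([u_{y_0}],[s])\rangle$ is isotropic of order four. The $2$-primary parts of $T^*/T$ and $D_7^*/D_7$ are both cyclic of order four and are paired by this subgroup, so $N$ has no $2$-primary discriminant. By the overlattice correspondence, $N^*/N$ is the odd part of $T^*/T$, which by the computation $|T^*/T|=4|P^*/P|$ is isomorphic as a quadratic form to $(P^*/P,q_P)$. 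Moreover $P^*/P$ is killed by $15$ and is anisotropic.

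Next I pin down the possibilities for $q_P$. $P$ is even of signature $(11,1)$ with anisotropic discriminant form of exponent dividing $15$. An anisotropic $p$-primary form with $p$ odd and exponent $p$ has rank at most $2$. So $q_P$ is a sum of a $3$-part of rank $0,1,2$ and a $5$-part of rank $0,1,2$. The oddity formula (Milgram's Gauss sum) fixes the signature mod $8$ as $11-1=10\equiv 2$, and with the determinant sign this cuts the list down. Then $N$ is positive definite of rank $18$, so the complement must have signature $6\equiv -2 \pmod 8$ with discriminant form $-q_N$. I expect this to leave exactly four cases. Each is realized by a classical rank-six lattice, for example $E_6$ (form $3$-part of order $3$), $A_4\perp A_2$, $A_2\perp A_2\perp A_2$-type variants, or $A_4\perp A_4^{*}$-scaled companions. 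I would read these off Table~\ref{tab:complements} and check their discriminant forms directly. For each $C$ I fix an anti-isometry $\gamma\colon N^*/N\to C^*/C$ and set $\mathcal L=\{(x,w)\in N^*\perp C^*: \gamma(\bar x)=\bar w\}$. The graph of an anti-isometry is isotropic of order $\sqrt{|N^*/N||C^*/C|}$, so $\mathcal L$ is even and unimodular of rank $11+7+6=24$, and positive definite.

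For the intersection statements: since $\gamma$ is injective and the glue group meets $N^*/N\times 0$ and $0\times C^*/C$ trivially, we get $\mathcal L\cap(U\perp\Q^7)=N$ and $\mathcal L\cap W=C$. For $\mathcal L\cap\Q^7$, note $N\cap\Q^7$ consists of vectors $(t,d)+k(u_{y_0},s)$ with $t+ku_{y_0}=0$. The class of $u_{y_0}$ has order four, so $4\mid k$, and $4s\in D_7$, hence $N\cap\Q^7=D_7$. Finally, $t_y=(u_y,s)\in N\subset\mathcal L$ because $u_y-u_{y_0}\in T$.

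The main obstacle is the second step: proving that $q_P$ falls into exactly the four admissible isometry classes and that each has a matching rank-six complement. I would handle it by the explicit enumeration of anisotropic forms on $(\Z/3)^a\times(\Z/5)^b$ with $a,b\le2$, filtered by the Gauss-sum signature condition. For each survivor I would write down the gluing map $\gamma$ concretely on generators, which is the form a Lean verification can check.
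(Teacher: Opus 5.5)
Your proposal is correct and has the same architecture as the paper's proof. Both transfer the anisotropic odd form $q_P$ to $N$, list the anisotropic $3$- and $5$-primary forms, cut them down by a condition modulo $8$, take the rank-six complement from Table~\ref{tab:complements}, and glue along the graph of an anti-isometry. Your check that $\mathcal L\cap\Q^7=D_7$ via $4\mid k$ is more explicit than the paper's.

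The real difference is how the mod-$8$ condition is obtained.
\begin{itemize}
\item You use Milgram's Gauss-sum formula. The case check you left as an expectation does work. The Gauss sums give signatures $2,6,4$ for $\langle2/3\rangle$, $\langle4/3\rangle$ and the anisotropic binary $3$-form. They give $0,4,4$ for $\langle2/5\rangle$, $\langle4/5\rangle$ and the binary $5$-form. Requiring a total of $2\pmod 8$ leaves exactly the four rows of the table, and no determinant-sign condition is needed.
\item The paper never evaluates a Gauss sum. It cancels each of the eight possible primary forms by an explicit positive even lattice: $0,E_6,A_2,A_2\perp A_2$ at $3$, and $0,\Xi_8,A_4,\Xi_4$ at $5$. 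It glues these onto $N$ and uses only the theorem that a positive-definite even unimodular lattice has rank divisible by $8$, applied to rank $18+r_3+r_5$.
\end{itemize}
Your route is shorter and needs no auxiliary models, but it presupposes Milgram's theorem. The paper's route costs the extra lattices $\Xi_8$ and $\Xi_4$. In exchange it reduces the arithmetic to a single classical fact that is already proved in the formalization, which suits Lean.

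Two minor corrections.
\begin{itemize}
\item The odd part of $q_T$ is isometric to $q_P$ because $P/(T\perp\Z\rho)$ has order $4$, so the gluing leaves the odd-primary parts untouched. The identity $|T^*/T|=4|P^*/P|$ by itself only matches cardinalities.
\item Drop your illustrative complements. As stated, $A_2\perp A_2\perp A_2$ has a $3$-part of rank three, which cannot equal $-q_N$ for anisotropic $q_N$. Any $A_4\perp A_4^*$-type companion has rank eight. The rows that need care are $H_6$ and $A_4\perp Q_{15}$, and you must check their discriminant forms directly from their descriptions.
\end{itemize}
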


\begin{table}[ht]
\centering
\caption{The four explicit rank-six complements. Quadratic values in the
middle columns describe $\disc N$ and are taken modulo $2\Z$.
Angle brackets denote cyclic forms, and $0$ denotes the trivial group.}
\label{tab:complements}
\begin{tabular}{@{}llll@{}}
\toprule
$C$ & $3$-primary part & $5$-primary part & $\det C$\\
\midrule
$E_6$ & $\langle2/3\rangle$ & $0$ & $3$\\
$H_6$ & $\langle2/3\rangle$ & $\langle2/5\rangle$ & $15$\\
$A_2\perp A_4$ & $\langle4/3\rangle$ & $\langle4/5\rangle$ & $15$\\
$A_4\perp Q_{15}$ & $\langle4/3\rangle$ &
 $\langle2/5\rangle\perp\langle4/5\rangle$ & $75$\\
\bottomrule
\end{tabular}
\end{table}

Here $Q_{15}$ has Gram matrix $\left(\begin{smallmatrix}4&1\\1&4\end{smallmatrix}\right)$.
For an explicit description of $H_6$, take $A_5\perp\Z e$ with
$\norm{e}^2=10$ and adjoin $(3\omega,e/2)$, where $\omega$ generates
$A_5^*/A_5$ and $\norm{\omega}^2=5/6$. This is an index-two even extension,
of determinant $6\cdot10/4=15$; it contains the displayed $A_5$.

\begin{proof}[Proof of Proposition~\ref{prop:completion}]
An anisotropic quadratic form over $\F_3$ or $\F_5$ has dimension at
most two. Diagonalization leaves, at each prime, the trivial form, the
two one-dimensional square classes, or the anisotropic binary form.
The formal proof establishes these small normal forms directly, including
the lifts of their generators and their norms modulo $2\Z$.

One may first cancel each primary form by an explicit positive even
lattice, without prescribing the final dimension. The forms and their
cancelling models are listed in Table~\ref{tab:primary-models}.
Let $\omega_n\in A_n^*$ be an endpoint fundamental weight, of norm
$n/(n+1)$.
Here $\Xi_8$ is the index-four extension of $A_7\perp\Z e$,
$\norm{e}^2=10$, obtained by adjoining $(2\omega_7,e/2)$.
It has determinant $5$.
The lattice $\Xi_4$ is the index-three extension of $A_2\perp A_2(5)$
obtained by adjoining $(\omega_2,\omega_2)$, where $A_2(5)$ denotes
the form scaled by $5$. It has determinant $25$. Direct dual
calculations verify the asserted cancellations.

\begin{table}[ht]
\centering
\caption{Positive even lattices whose discriminant forms are the
negatives of the listed primary forms.}
\label{tab:primary-models}
\begin{tabular}{@{}cllr@{}}
\toprule
$p$ & Primary form & Cancelling model & Rank\\
\midrule
$3$ & $0$ & $0$ & $0$\\
$3$ & $\langle2/3\rangle$ & $E_6$ & $6$\\
$3$ & $\langle4/3\rangle$ & $A_2$ & $2$\\
$3$ & $\langle2/3\rangle\perp\langle2/3\rangle$ & $A_2\perp A_2$ & $4$\\
$5$ & $0$ & $0$ & $0$\\
$5$ & $\langle2/5\rangle$ & $\Xi_8$ & $8$\\
$5$ & $\langle4/5\rangle$ & $A_4$ & $4$\\
$5$ & $\langle2/5\rangle\perp\langle4/5\rangle$ & $\Xi_4$ & $4$\\
\bottomrule
\end{tabular}
\end{table}

The resulting positive-definite even unimodular lattice must have rank
divisible by $8$; see \cite[Chapter~2]{Ebeling2013}.
Applied to the source rank~$18$, this leaves exactly the four pairs in
Table~\ref{tab:complements}. The rank divisibility is itself proved in
the development, not assumed.

For each remaining pair, Table~\ref{tab:complements} supplies a rank-six
lattice with the negative discriminant form. In particular, $H_6$
replaces the preliminary $E_6\perp\Xi_8$, of rank $6+8=14$, by a
rank-six lattice with the same discriminant form. For the last pair,
$A_4\perp Q_{15}$ replaces $A_2\perp\Xi_4$, both of rank~$6$.
Indeed, in $Q_{15}$ the dual vectors
$(1/3,-1/3)$ and $(1/5,1/5)$ have norms $2/3$ and $2/5$ and are
orthogonal. Together with the dual of $A_4$, they give the last row's
opposite primary forms after multiplying the $5$-primary generators by~$2$.
The other rows are verified in the same way from their coordinate models.
Glue the full discriminant groups along these opposite forms. The graph
of the gluing is maximal isotropic, so the resulting lattice is even
and self-dual. Its intersection with either rational summand is exactly
the prescribed summand lattice. This also preserves the earlier marked
$D_7$ intersection and all the vectors $t_y$.
\end{proof}
\leanref{SRG266/Completion/Rank24Completion.lean}{exists_rank24_completion}

\section{The root system is forced}\label{sec:roots}

A root means a lattice vector of squared norm~$2$.
The first step is to show that the marked $D_7$ cannot acquire roots
joining it to the other summands.

\begin{lemma}[Root isolation]\label{lem:isolation}
Every root of $\mathcal L$ is either a root of the marked $D_7$ or is
orthogonal to its entire seven-dimensional span.
\end{lemma}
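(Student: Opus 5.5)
The plan is to decompose an arbitrary root according to the orthogonal splitting of the ambient space, and then use the $220$ marked vectors $t_y$ together with the tight-frame identity \eqref{eq:tight-frame} as a second-moment test.

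Let $r\in\mathcal L$ with $\norm{r}^2=2$, and write
\[
r=(a,b,w),\qquad a\in U,\quad b\in\Q^7,\quad w\in W .
\]
Since $\mathcal L$ is unimodular and contains $D_7$, pairing $r$ with $D_7$ gives integers, so $b\in D_7^*$. Then $b$ lies in one of the four classes of $D_7^*/D_7$: the trivial class, the vector class $e_1+D_7$, or one of the two spinor classes $\pm s+D_7$. In all cases $\norm{a}^2\le 2-\norm{b}^2$.

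First suppose $b\in D_7$. Then $r-b=(a,0,w)\in\mathcal L$ has norm $2-\norm{b}^2$. If $b\neq0$, then $\norm{b}^2\ge2$, because $D_7$ is even with minimum $2$. This forces $r-b=0$, so $r=b$ is a root of the marked $D_7$; here Proposition~\ref{prop:completion} gives $\mathcal L\cap\Q^7=D_7$. If $b=0$, then $r$ is orthogonal to the seven-dimensional span. These are exactly the two alternatives of the lemma, so it remains to exclude the two nontrivial class types.

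The tool for this is the marked vectors. Each $t_y=(u_y,s,0)$ lies in $\mathcal L$, so
\[
\ip{r}{t_y}=\ip{a}{u_y}+\ip{b}{s}\in\Z\qquad\text{for all }y\in Y .
\]
Moreover, by \eqref{eq:tight-frame} applied to $v=w=a$,
\[
\sum_{y}\ip{a}{u_y}^2=45\norm{a}^2 .
\]
The value $\ip{b}{s}$ modulo $\Z$ depends only on the class of $b$, and it forces every one of the $220$ numbers $\ip{a}{u_y}$ to lie a fixed positive distance from $\Z$. That yields a lower bound on the sum which turns out to exceed $45\norm{a}^2$.

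In the vector class, $b=e_1+d$ with $d\in D_7$. Then $\ip{b}{s}\in\tfrac12+\Z$ and $\norm{b}^2\ge1$, so $\norm{a}^2\le1$. Every $\ip{a}{u_y}$ lies in $\tfrac12+\Z$, hence
\[
\sum_y\ip{a}{u_y}^2\ge \frac{220}{4}=55>45\ge45\norm{a}^2 ,
\]
a contradiction.

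In a spinor class, $\norm{b}^2\ge7/4$, so $\norm{a}^2\le1/4$. A direct computation gives
\[
\ip{s+d}{s}=\frac74+\frac12\textstyle\sum_i d_i\equiv\frac34\pmod 1 ,
\]
and for the other spinor class, represented by $s-e_1$, the value is $\equiv\frac14\pmod 1$. Either way $|\ip{a}{u_y}|\ge\tfrac14$ for every $y$, so
\[
\sum_y\ip{a}{u_y}^2\ge\frac{220}{16}=13.75>\frac{45}{4}\ge45\norm{a}^2 ,
\]
again a contradiction.

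The main point needing care is bookkeeping rather than depth. The dual class of the projection $b$ must be computed for a vector of $\mathcal L$, not of $N$. The integrality $\ip{r}{t_y}\in\Z$ must come from $t_y\in\mathcal L$ and unimodularity, without any assumption on the $W$-component; this works because $t_y$ has zero $W$-component. The tight-frame identity must be applied to the $U$-component $a$, which is legitimate since \eqref{eq:tight-frame} holds for all $v,w\in U$. The margins in both inequalities, $55$ versus $45$ and $13.75$ versus $11.25$, leave room. I do not expect to need the finer glue structure, that is, the correlation between the class of $b$ and the $2$-primary class of $a$ in $T^*/T$.
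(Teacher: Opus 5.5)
Your proof is correct and is essentially the paper's argument. The paper also places the $\Q^7$-component in $D_7^*$ and uses integrality against the $220$ vectors $(u_y,s,0)$ to force every $U$-pairing to have square at least $d/4$, where $d$ is $2$ minus the squared norm of the $\Q^7$-component. It then contradicts the tight-frame identity \eqref{eq:tight-frame} via $220\le180$, which is exactly your pair of inequalities $55>45$ and $13.75>11.25$.

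The only difference is cosmetic. You sort by discriminant class rather than by the short vectors $\pm e_i$ and $(\pm\tfrac12)^7$. Also, in your trivial-class case the marked intersection $\mathcal L\cap\Q^7=D_7$ is not needed, since $b\in D_7$ holds there by assumption.
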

\begin{proof}
Write a root as $(v,a,w)\in U\perp\Q^7\perp W$.
Integrality against $D_7$ implies $a\in D_7^*$.
If $0<\norm{a}^2<2$, the elementary coordinate description of $D_7^*$ gives
either $a=\pm e_i$, of norm~$1$, or seven half-integral coordinates
$\pm1/2$, of norm~$7/4$.
Since $(v,a,w)$ pairs integrally with every $(u_y,s,0)$,
\[
\ip{v}{u_y}+\ip{a}{s}\in\Z.
\]
In the first case $\ip{a}{s}\in\Z+1/2$; in the second it is an odd
multiple of $1/4$. Thus, with $d=2-\norm{a}^2>0$, every index satisfies
\[
\ip{v}{u_y}^2\ge d/4.
\]
On the other hand, the root norm and~\eqref{eq:tight-frame} give
\[
\sum_y\ip{v}{u_y}^2=45\norm{v}^2\le45d.
\]
This would imply $220\le180$. Therefore a nonzero $a$ has norm~$2$,
and positive definiteness forces $v=w=0$. The marked intersection in
Proposition~\ref{prop:completion} then gives $a\in D_7$.
\end{proof}

The next identity is due to Venkov \cite{Venkov1999}. Together with the
ADE second moments, it implies that a nonempty rank-$24$ root system has
full rank and a common Coxeter number. We reproduce the argument needed
for our marked lattice, rather than invoke the Niemeier classification.

\begin{lemma}[Venkov's harmonic root identity]\label{lem:harmonic}
For the root set $R(\mathcal L)$,
\begin{equation}\label{eq:root-moment}
\sum_{r\in R(\mathcal L)}\ip{r}{v}\ip{r}{w}
=\frac{|R(\mathcal L)|}{12}\ip{v}{w}.
\end{equation}
\end{lemma}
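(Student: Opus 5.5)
We follow Venkov's argument through the theta series of $\mathcal L$ twisted by a harmonic polynomial of degree two. Since $\mathcal L$ is even unimodular of rank~$24$ (Proposition~\ref{prop:completion}), for each $v\in\mathcal L\otimes\mathbb R$ and each degree-two harmonic polynomial $P$, the function
\[
\theta_{\mathcal L,P}(\tau)=\sum_{x\in\mathcal L}P(x)\,q^{\norm{x}^2/2}
\]
is a holomorphic modular form of weight $24/2+2=14$ for $\mathrm{SL}_2(\Z)$. For $\deg P>0$ it is moreover a cusp form, because the constant term $P(0)$ vanishes. The space $S_{14}(\mathrm{SL}_2(\Z))$ is zero, since $\dim M_{14}=1$ and $M_{14}$ is spanned by $E_{14}$, which is not a cusp form. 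Hence $\theta_{\mathcal L,P}\equiv0$. In particular, its $q^1$ coefficient gives
\[
\sum_{r\in R(\mathcal L)}P(r)=0 .
\]

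To derive \eqref{eq:root-moment}, apply this with
\[
P(x)=\ip{x}{v}\ip{x}{w}-\frac{\ip{v}{w}}{24}\norm{x}^2,
\]
which is harmonic: its Laplacian is $2\ip{v}{w}-2\ip{v}{w}=0$. On roots, $\norm{r}^2=2$, so the vanishing of $\sum_r P(r)$ becomes
\[
\sum_{r}\ip{r}{v}\ip{r}{w}=\frac{2|R|}{24}\ip{v}{w}=\frac{|R|}{12}\ip{v}{w}.
\]
It is enough to treat the cases $v=w$ and $v\pm w$ and then polarize. The inputs are all standard. The transformation law of harmonic theta series comes from Poisson summation applied to the Fourier-transform eigenfunction $P(x)e^{-\pi\norm{x}^2t}$ together with $\mathcal L^*=\mathcal L$. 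The dimension count for level-one forms comes from the valence formula, or from the description $M_*=\mathbb C[E_4,E_6]$, which leaves no weight-$14$ cusp forms.

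The main obstacle is not the algebra but establishing the modularity statement itself. In the formal setting this requires Poisson summation for Gaussians twisted by harmonic polynomials and the structure theorem for level-one modular forms. I would therefore first try a more elementary route that avoids full modular-form theory. This is Venkov's design-theoretic version: show that each shell $\mathcal L_{2m}$ is a spherical $3$-design, and in fact a $2$-design suffices. The idea is to compare the theta series $\sum_x \ip{x}{v}^2q^{\norm{x}^2/2}$ with $\norm{v}^2\sum_x\norm{x}^2q^{\norm{x}^2/2}/24$ through the heat-operator identity
\[
\sum_x\ip{x}{v}^2 q^{\norm{x}^2/2}=\frac{\norm{v}^2}{2\pi i}\,\frac{d}{d\tau}\cdots
\]
If the paper's development already provides Poisson summation, the cusp-form route is the one to take, and the only genuinely new check is the dimension $\dim S_{14}=0$.
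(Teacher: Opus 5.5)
Your cusp-form argument is correct and is essentially the paper's proof: your $P$ is the paper's $p_{v,w}$ divided by $24$, and you get $S_{14}=0$ from $M_{14}=\mathbb C E_{14}$, where the paper instead divides by $\Delta$ to reach $M_2=0$. The polarization remark is unnecessary, since your $P$ is already harmonic for $v\ne w$; the heat-operator ``elementary route'' is unfinished and would still rest on the same modular transformation, so you can simply drop it.
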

\begin{proof}
For fixed real vectors $v,w$, the polynomial
\[
p_{v,w}(x)=24\ip{x}{v}\ip{x}{w}-\ip{v}{w}\norm{x}^2
\]
is homogeneous and harmonic of degree~$2$. By the harmonic-theta
transformation theorem of Hecke and Schoeneberg, its theta series on an
even unimodular lattice of rank~$24$ is a level-one cusp form of
weight~$14$; see \cite[Chapter~3]{Ebeling2013}.
The space of such cusp forms is zero: division by the discriminant cusp
form identifies it with the space of weight-two modular forms, which is
zero \cite[Chapter~2]{Ebeling2013}.
The coefficient of $q$ in this vanishing theta series is precisely
$24\sum_r\ip{r}{v}\ip{r}{w}-2|R(\mathcal L)|\ip{v}{w}$.

In Lean, the analytic step includes the Gaussian Fourier transform,
Poisson summation, convergence of the weighted theta series, and its
modular transformation and cusp conditions. Only then is Mathlib's
level-one theory applied, specifically
\path{CuspForm.discriminantEquiv} and
\path{ModularForm.levelOne_weight_two_rank_zero}.
These belong to Chris Birkbeck's development of level-one dimension
formulas for the sphere-packing formalization
\cite[Section~2.3]{HariharanEtAl2026}.
Our application is in
\path{SRG266/Lattice/LevelOneWeight14.lean}.
The root identity is therefore not a placeholder for a classification theorem.
\end{proof}

\begin{proposition}\label{prop:root-system}
The lattice $\mathcal L$ has $288$ roots, which span its rational space.
Its root system is $A_{11}\perp D_7\perp E_6$, with the $D_7$ factor
equal to the marked factor.
\end{proposition}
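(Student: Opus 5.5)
We apply Venkov's identity \eqref{eq:root-moment} to vectors in the span of the marked $D_7$. By Lemma~\ref{lem:isolation}, the root set decomposes as $R(\mathcal L)=R(D_7)\sqcup R'$, where every $r\in R'$ is orthogonal to $\Q^7$. Take $v=w$ to be a root $\alpha$ of the marked $D_7$. Only roots of $D_7$ contribute to the left side of \eqref{eq:root-moment}. The $D_7$ root system has Coxeter number $h=12$, so its second moment is $\sum_{r\in R(D_7)}\ip{r}{\alpha}^2=2h\norm{\alpha}^2/2\cdot\ldots$; concretely it equals $4h=48$. For $\alpha=e_1+e_2$, a direct count of the roots $\pm e_i\pm e_j$ gives $48$. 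Equating with $|R(\mathcal L)|\cdot 2/12$ yields $|R(\mathcal L)|=288$.

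Next we show that the roots span. For any $v$ orthogonal to all roots, \eqref{eq:root-moment} gives $0=\frac{288}{12}\norm{v}^2$, so $v=0$. Hence $R(\mathcal L)$ has full rank $24$. Decompose $R'$ into irreducible ADE components $R_1,\dots,R_m$ spanning the $17$-dimensional orthogonal complement of $\Q^7$. Restricting \eqref{eq:root-moment} to a root $\beta$ of $R_i$ gives $4h(R_i)=48$, so every component has Coxeter number $12$. The irreducible root systems with $h=12$ are $A_{11}$, $D_7$, $E_6$ and $F_4$; among simply laced ones, where $F_4$ is excluded, these are $A_{11}$ (rank $11$), $D_7$ (rank $7$) and $E_6$ (rank $6$). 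Moreover the count is consistent, since each component of rank $n$ has $nh=12n$ roots, and $12\cdot24=288$.

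We must then write $17$ as a sum of ranks from $\{11,7,6\}$. The only solution is $17=11+6$; indeed $7+7=14$ and $7+6=13$ leave remainders $3$ and $4$ that cannot be filled, and $6+6=12$ leaves $5$. This gives $A_{11}\perp D_7\perp E_6$, with the marked factor as the $D_7$. The ADE classification of irreducible root systems and the value of their Coxeter numbers and second moments are finite, classical facts, so the formal proof establishes them directly.

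The main obstacle is justifying the ADE decomposition and the formula $\sum_{r\in R}\ip{r}{\beta}^2=4h$ for each irreducible component, without appealing to heavy classification. A cheaper route would be to enumerate the possible root systems of rank at most $17$ with $|R|=12\cdot\text{rank}$ and uniform Coxeter number $12$. Even so, the classification of irreducible simply laced root systems, as the root sets of positive definite lattices generated by norm-$2$ vectors, is the essential ingredient. I would first try to cite it as a small finite Dynkin-diagram argument carried out within the development.
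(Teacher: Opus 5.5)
Your proposal is correct and follows essentially the same route as the paper. Root isolation plus Venkov's identity on the marked $D_7$ gives $288$ roots and full rank (you evaluate at a root and get $48$, while the paper uses the equivalent scalar $24$); the per-component second moment $2h$ then forces Coxeter number $12$, and $17=11+6$ is the only rank split. One harmless slip in an aside: the irreducible systems with $h=12$ include $B_6$ and $C_6$ as well as $F_4$, which does not matter because norm-two root systems are simply laced.
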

\begin{proof}
For $D_7$, summing over the roots $\pm e_i\pm e_j$ gives
\[
\sum_{r\in R(D_7)}\ip{r}{a}\ip{r}{b}=24\ip{a}{b}.
\]
Restrict~\eqref{eq:root-moment} to the marked span and use
Lemma~\ref{lem:isolation}. It follows that $|R(\mathcal L)|/12=24$,
so there are $288$ roots. The now positive-definite second moment also
shows that the roots span.

An integral norm-two root system is simply laced. Its simple-root graph
is a disjoint union of positive-definite ADE diagrams; for the standard
root-system classification and Coxeter numbers, see
\cite[Chapters~4--6]{Bourbaki2002}. The second-moment
scalar on an irreducible component is twice its Coxeter number, so every
component here has Coxeter number~$12$. The only possibilities are
$A_{11}$, $D_7$, and $E_6$. Since the isolated marked $D_7$ is a full
component, the remaining rank~$17$ must be $11+6$. This gives the stated
layout. The root counts $132+84+72=288$ agree with the moment calculation.
\end{proof}
\leanref{SRG266/Completion/RootLayout.lean}{exists_rank24_root_layout}

\section{First moments, component masses, and the final obstruction}
\label{sec:moments}

Fix the completion and a root-coordinate isometry supplied by
Proposition~\ref{prop:root-system}. Denote the real spans of $A_{11}$ and
$E_6$ by $E_A$ and $E_E$. The marked identification gives two orthogonal
decompositions of the same space:
\begin{equation}\label{eq:two-splittings}
D_7^\perp=U\perp W=E_A\perp E_E.
\end{equation}
For each original index $y$ put
$a_y=\pr_{E_A}t_y$ and $e_y=\pr_{E_E}t_y$.
Then $u_y=a_y+e_y$ in this identification, and
\begin{equation}\label{eq:first-moments}
\sum_y a_y=0,\qquad \sum_y e_y=0,\qquad
\norm{a_y}^2+\norm{e_y}^2=9/4.
\end{equation}
Integrality against roots places $a_y$ in $A_{11}^*$ and $e_y$ in $E_6^*$.

\subsection{Two short-vector types}
Realize $A_{11}$ as the integer zero-sum lattice on $12$ coordinates.
A vector of its dual has coordinates
\[
z_i-b/12,\qquad z_i\in\Z,\qquad \sum_i z_i=b,
\]
where one may choose $0\le b\le11$. Its norm is at least
$b(12-b)/12$, with equality exactly when all $z_i$ are $0$ or $1$;
its norm modulo $2\Z$ is $11b^2/12$.
The discriminant group of $E_6$ is cyclic of order $3$. The trivial
class has even norm; each nontrivial class has minimum $4/3$ and norms
congruent to $4/3$ modulo $2\Z$.
Combining these facts with~\eqref{eq:first-moments} leaves just
\begin{equation}\label{eq:raw-types}
\begin{array}{c|c|c}
b & \norm{a_y}^2 & \norm{e_y}^2\\ \hline
3\text{ or }9 & 9/4 & 0\\
1\text{ or }11 & 11/12 & 4/3.
\end{array}
\end{equation}
In each case the $A_{11}$ lower bound is attained, so the integer
coordinates are binary. These statements use short-vector inequalities,
not a list of vectors in a shell.

There is also a \emph{global} choice of orientation. If the $E_6$
class of index $y$ is denoted by $\epsilon_y\in\{0,1,-1\}$, integrality
of~\eqref{eq:lifted-pair} gives
\begin{equation}\label{eq:label-congruence}
12\mid 21-b_yb_z+16\epsilon_y\epsilon_z.
\end{equation}
Choose an index with nontrivial $E_6$ class, if one exists, and compare
all indices with it in this congruence. After possibly reversing all
$A_{11}$ coordinates, the two types become
\begin{equation}\label{eq:normalized-types}
\begin{array}{c|c|c|c}
\text{type} & b & \norm{a_y}^2 & e_y\\ \hline
\mathrm{I} & 9 & 9/4 & 0\\
\mathrm{II} & 1 & 11/12 &
 \norm{e_y}^2=4/3,\quad e_y\in\delta\omega_E+E_6,
\end{array}
\end{equation}
for one fixed $\delta\in\{1,-1\}$ and a generator $\omega_E$ of the
dual quotient. If all classes are trivial, the same congruence forces
all labels to be $3$, or all to be $9$, and the same normalization applies.
\leanref{SRG266/Lattice/ShortFrameLabelNormalization.lean}{shortFrameLabel_global_orientation}

\subsection{The type-\texorpdfstring{$A$}{A} alternative and root placement}
We shall prove in Section~\ref{sec:design} that the required
quasi-symmetric $2$-$(56,12,9)$ design does not exist. We use that result
here only through the following elementary implication.

\begin{lemma}\label{lem:type-a}
The family $(e_y)$ is not identically zero.
\end{lemma}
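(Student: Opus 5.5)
The plan is to argue by contradiction: assume $e_y=0$ for every $y$, show that $L$ then factors through binary weight-three columns, and extract from this a Delsarte coclique of size $56$ in $\Gamma$. The quasi-symmetric $2$-$(56,12,9)$ design with intersections $0,3$ attached to such a coclique is excluded in Section~\ref{sec:design}.

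\emph{Step 1: every index has type~I.} If all $e_y$ vanish, every index has trivial $E_6$ class. By the normalization after \eqref{eq:label-congruence}, every index then has type~I with $b_y=9$. So $a_y=\one_{Z_y}-\tfrac34\one$, where $Z_y$ is a $9$-subset of the $12$ coordinates. Let $T_y$ be the complementary $3$-subset.

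\emph{Step 2: the binary factorization of $L$.} Since $t_y=(u_y,s,0)$ and $u_y=a_y$, we get $\ip{t_y}{t_z}=\ip{a_y}{a_z}+\tfrac74$. Also
\[
\ip{a_y}{a_z}=|Z_y\cap Z_z|-\tfrac{27}{4},\qquad |Z_y\cap Z_z|=6+|T_y\cap T_z|.
\]
Comparing with \eqref{eq:lifted-pair} gives
\[
L_{yz}=|T_y\cap T_z|\qquad\text{for all }y,z,
\]
so $L=N^{\mathsf T}N$ for the $12\times220$ binary incidence matrix $N$ of the $3$-sets $T_y$. As a consistency check, $\sum_ya_y=0$ says each coordinate lies in exactly $55$ of the $T_y$. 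This matches $NN^{\mathsf T}=45I+10J$, which follows from the spectrum $165^{(1)},45^{(11)}$ of $L$ and $\tr L=660$.

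\emph{Step 3: a Delsarte coclique.} For each coordinate $i$ put $C_i=\{y:i\in T_y\}$, so $|C_i|=55$. If $y,z\in C_i$ are distinct, then $L_{yz}\ge1$. By the description of $L$ after \eqref{eq:gram}, $L_{yz}=0$ whenever $y\sim_H z$, so $y$ and $z$ are nonadjacent. Since $o$ has no neighbours in $Y$, the set $C_i\cup\{o\}$ is a coclique of size $56$, which is the Hoffman bound.

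\emph{Step 4: the design and the contradiction.} A coclique meeting the Hoffman bound is Delsarte. The standard argument then applies: each vertex outside the coclique has exactly $45/(1+3)\cdot\ldots$, namely $12$, neighbours in it. So the neighbourhoods of the remaining $210$ vertices form a quasi-symmetric $2$-$(56,12,9)$ design with block intersections $0,3$ \cite{AdmEtAl2018,MunemasaTonchev2020}. This contradicts the nonexistence proved in Section~\ref{sec:design}.

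The main obstacle is Step~4. It requires showing from \eqref{eq:srg} alone, rather than from the literature, that a $56$-coclique yields exactly this design. The key points are the constant count $12$ of neighbours per outside vertex, obtained by equality in the Hoffman bound, and the intersection numbers $\{0,3\}$, obtained from $\lambda=0$ and $\mu=9$ by a variance computation. I would first derive the block size and the pair count $9$ by double counting, and then the intersection sizes from the quadratic identity for the projection of the coclique's characteristic vector onto the $-12$ eigenspace. Steps~1--3 are direct computations from \eqref{eq:normalized-types}.
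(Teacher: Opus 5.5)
Your proposal is correct and is essentially the paper's proof. You force type~I with $b_y=9$, complement to weight-three binary columns giving $L=N^{\mathsf T}N$, take a row support together with $o$ as a Hoffman-tight $56$-coclique, and apply Theorem~\ref{thm:noqsd} to the $210$ outside neighbourhoods.

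The differences are minor:
\begin{itemize}
\item You read the row sums $55$ directly off $\sum_y a_y=0$. The paper instead gets them from the centroid $q=5\one$ and injectivity of $F^{\mathsf T}$.
\item Your variance count with $\lambda=0$, $\mu=9$ (mean $528/176=3$, second moment $1584=176\cdot 9$) is the combinatorial form of the paper's identity $Z_0^2=-45Z_0$ with zero diagonal. It is this count, not the eigenvector projection, that gives the intersection numbers $0,3$.
\item The eigenvector projection only supplies the count $-\theta_{\min}=12$. That value is what your garbled expression in Step~4 should read.
\end{itemize}
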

\begin{proof}
If every $e_y=0$, the normalized $A_{11}$ coordinates have binary
weight~$9$. Complement their supports to obtain binary columns $w_y$
of weight~$3$ on $12$ coordinates. Centering subtracts $(1/4)\one$,
and hence
\[
\ip{w_y}{w_z}=\ip{a_y}{a_z}+3/4=L_{yz}.
\]
Let $F$ be the resulting $12\times220$ matrix. Its rank is~$12$,
since $F^{\mathsf T}F=L$. The centroid is an integral vector $q$ in
these coordinates and $F^{\mathsf T}q=15\one$.
Because $F^{\mathsf T}(5\one)=15\one$ and $F^{\mathsf T}$ is injective,
we have $q=5\one$. Thus $F\one=11q=55\one$.
The support of any row is a $55$-coclique in $H$: an edge of $H$ has
zero Gram entry, whereas two columns sharing a row have positive product.
Adjoining $o$ gives a $56$-coclique in~$\Gamma$.

A coclique $C_0$ of this size meets the Hoffman bound. Its equality case
\cite[Section~3.5]{BrouwerHaemers2012} forces every outside vertex to
have $12$ neighbours in $C_0$. These neighbourhoods form a
$2$-$(56,12,9)$ design. This is an instance of the regular-set and
quasi-symmetric-design correspondence \cite{Neumaier1982,AdmEtAl2018}.
To obtain its intersections explicitly,
write $M_0$ for its incidence matrix and $A_0$ for the adjacency matrix
outside the coclique. The blocks of~\eqref{eq:srg} imply
\[
M_0M_0^{\mathsf T}=36I+9J,\qquad
M_0A_0=9J-9M_0,\qquad
M_0^{\mathsf T}M_0+A_0^2=36I-9A_0+9J.
\]
The symmetric matrix
$Z_0=M_0^{\mathsf T}M_0-(9I+3J-3A_0)$ has zero diagonal and satisfies
$Z_0^2=-45Z_0$. Its diagonal square entries are sums of squares, so $Z_0=0$.
Distinct design blocks therefore meet in $0$ or $3$ points, contradicting
Theorem~\ref{thm:noqsd}.
\end{proof}
\leanref{SRG266/Completion/TypeAFactorization.lean}{lorentzian_type_A_binary_factorization}

Each of the four complements in Table~\ref{tab:complements} contains
an $A_4$ chain. Connected root chains must lie wholly in one of the
two components $A_{11},E_6$: a root is supported in one component, and
adjacent roots have nonzero pairing. We next show that an $A_4$ chain
of the complement cannot lie in~$E_6$.

\begin{lemma}\label{lem:e6-gap}
Suppose an $A_4$ root chain lies in $E_6$. Vectors $v,w\in E_6^*$ of
norm $4/3$ in the same nontrivial discriminant class, both orthogonal
to this chain, satisfy $\ip{v}{w}\ge1/3$.
\end{lemma}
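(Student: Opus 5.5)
The plan is to reduce the inequality to a statement about the rank-two lattice $K=E_6\cap\Lambda_4^{\perp}$. Here $\Lambda_4$ denotes the span of the given $A_4$ chain. The first step restricts the possible values of $\ip{v}{w}$. Since $v,w$ lie in the same class of $E_6^*/E_6$, we have $v-w\in E_6$, so
\[
\norm{v-w}^2=\tfrac83-2\ip{v}{w}
\]
is an even integer. Cauchy--Schwarz gives $|\ip{v}{w}|\le 4/3$. Hence $\ip{v}{w}\in\{4/3,1/3,-2/3\}$, and the norms $0,2,4$ of $v-w$ correspond to these three values. So it suffices to exclude $\ip{v}{w}=-2/3$, that is, $\norm{v-w}^2=4$.

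Because $v$ and $w$ are both orthogonal to the chain, so is $v-w$. Therefore $v-w$ is a vector of norm $4$ in $K$. It remains to show that $K$ has no vectors of norm~$4$.

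To identify $K$, I would first fix a concrete coordinate model of $E_6$ and a standard $A_4$ chain in it. For instance, take four consecutive nodes of the Dynkin diagram, chosen so that the omitted two nodes still leave a simple root orthogonal to the chain. I would then justify that every $A_4$ root chain in $E_6$ is conjugate to this one under $W(E_6)$. The route is the standard fact that a simply laced root system has a single Weyl orbit of $A_4$ subsystems when the subsystem has a unique embedding type, combined with the observation that any $A_4$ chain can be moved by $W$ into a set of simple roots. The Weyl group preserves $E_6$, $E_6^*$, norms and discriminant classes, so the claim is Weyl invariant and may be checked on the standard chain.

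In the model I would compute a basis of $K$. It contains a root $r$ orthogonal to $A_4$, giving $A_4\perp A_1\subset E_6$, together with one further vector. I expect the Gram matrix $\left(\begin{smallmatrix}2&1\\1&8\end{smallmatrix}\right)$, of determinant $15=\det A_4\cdot\det E_6$, as expected for a primitive orthogonal complement. This matrix represents no $4$: the equation $2a^2+2ab+8b^2=4$ forces $b=0$ and $a^2=2$, or else $|b|\ge1$ and $2a^2\pm2a+4\le 0$, and neither has an integer solution. That finishes the proof.

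The main obstacle is the conjugacy step together with the correct determination of $K$, that is, confirming the Gram matrix rather than a different determinant-$15$ binary form. As a fallback that avoids the conjugacy statement, I would try a direct count. Every vector of $K$ of norm $4$ would combine with $\Lambda_4$ inside $E_6$, and bounding the norm-$4$ vectors of $K$ via $\det K=15$ and the primitive gluing would rule out such a vector. In Lean, the natural route is simply the explicit model plus a finite check.
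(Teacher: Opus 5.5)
Your proof is correct, and after the shared first step it takes a genuinely different route. Both arguments reduce the claim to excluding $\ip{v}{w}=-2/3$.

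\textbf{The paper's route.} The paper never identifies $K=E_6\cap\Lambda_4^\perp$. In the bad case it uses a second vector besides $v-w$, namely $v+2w=(v-w)+3w$. This lies in $E_6$ because the discriminant group has order~$3$. The pair $v-w,\ v+2w$ has Gram matrix $\left(\begin{smallmatrix}4&-2\\-2&4\end{smallmatrix}\right)$, of determinant $12$. Together with the chain they span a full-rank sublattice of $E_6$ of determinant $60$. Every such sublattice has determinant $3m^2$, and $20$ is not a square. This argument does not depend on how the chain sits in $E_6$, and it needs no model, no Weyl group and no finite check.

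\textbf{What your route needs.} Because you use $v-w$ alone, you must know $K$ up to isometry, and that is delicate. The even binary forms of determinant $15$ are $\left(\begin{smallmatrix}2&1\\1&8\end{smallmatrix}\right)$ and $\left(\begin{smallmatrix}4&1\\1&4\end{smallmatrix}\right)=Q_{15}$, and the second one does represent~$4$. So the conjugacy step carries real weight. As written, your justification (``a single Weyl orbit when the subsystem has a unique embedding type'') essentially restates what you need.

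Here is a sound version of that step:
\begin{itemize}
\item Since $5$ is squarefree, $E_6\cap\Q\Lambda_4=\Lambda_4$. Hence the roots of $E_6$ in the span of the chain form exactly an $A_4$ system.
\item The roots lying in a subspace form a parabolic subsystem: take $x$ generic in the orthogonal complement of the subspace and move it into the closed fundamental chamber.
\item So the chain's subsystem is $W$-conjugate to one of the four $A_4$ subdiagrams $\{1,3,4,5\}$, $\{3,4,5,6\}$, $\{2,4,5,6\}$, $\{1,2,3,4\}$ (Bourbaki labelling).
\item Each of these has an orthogonal root: the highest root for the first two, and $\alpha_1$, respectively $\alpha_6$, for the last two.
\end{itemize}
Thus $K$ contains a root and has determinant $15$, which pins it down as $\left(\begin{smallmatrix}2&1\\1&8\end{smallmatrix}\right)$.

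\textbf{A smaller point.} $\det K=\det A_4\cdot\det E_6$ is not a general property of primitive complements in a non-unimodular lattice. The correct relation is $\det A_4\cdot\det K=\det E_6\,[E_6:\Lambda_4\perp K]^2$. The index divides $5$, and integrality of $\det K$ forces it to equal $5$, which gives $\det K=15$.

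\textbf{Comparison.} Your route gives a stronger structural conclusion: $K$ has no norm-$4$ vectors at all, and this is easy to machine-check in a model. The paper's route is independent of the embedding and takes a two-line determinant argument, in keeping with its classification-free aim. Your fallback idea was close; the missing ingredient is the second vector $v+2w$.
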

\begin{proof}
The pairing is congruent to $4/3$ modulo $\Z$. Cauchy--Schwarz bounds
it below by $-4/3$, so the only possibility below $1/3$ is $-2/3$.
In that case $v-w$ and $v+2w$ have Gram matrix
$\left(\begin{smallmatrix}4&-2\\-2&4\end{smallmatrix}\right)$.
Together with the orthogonal $A_4$ chain these six rational vectors
have Gram determinant $5\cdot12=60$.
Every rational basis of the $E_6$ space has determinant $3d^2$ for
some $d\in\Q$. This would give $d^2=20$, which is impossible.
\end{proof}

If a complementary $A_4$ lay in $E_6$, every $e_y$ would be orthogonal
to it. By the common-coset normalization and Lemma~\ref{lem:e6-gap},
all pairings among the nonzero $e_y$ would be positive. But their sum is
zero by~\eqref{eq:first-moments}. Pairing a nonzero member with that sum
is a contradiction. Thus they would all vanish, contrary to
Lemma~\ref{lem:type-a}. Every complementary $A_4$, and consequently
every complementary $A_5$, therefore lies in~$A_{11}$.

\subsection{Masses from a constant coordinate block}
An $A_4$ root chain in $A_{11}$ is, up to signed orientation and coordinate
permutation, a chain of differences on five of the twelve coordinates.
Orthogonality makes these five coordinates equal in every $a_y$.
In type~I the corresponding five binary entries must all be $1$,
because there are only three zero entries in total; in type~II they
must all be $0$, because there is only one entry equal to~$1$.
Their common centered coordinates are thus $1/4$ and $-1/12$.
If $n_0,n_1$ count the two types, the first moment yields
\[
n_0+n_1=220,\qquad n_0/4-n_1/12=0.
\]
It follows that
\begin{equation}\label{eq:masses}
n_0=55,\quad n_1=165,\qquad
\sum_y\norm{a_y}^2=275,\qquad
\sum_y\norm{e_y}^2=220.
\end{equation}
The argument applies to every complement in the table. It counts
\emph{indices}; no injectivity of the projection maps is used.
\leanref{SRG266/Lattice/A11FirstMomentCounts.lean}{a11_A4_first_moment_counts}

\subsection{Trace bounds eliminate three complements}
For $v\in E_A$ and $w\in E_E$, equation~\eqref{eq:tight-frame} and
orthogonal projection give
\begin{equation}\label{eq:component-bound}
\sum_y\ip{v}{a_y}^2\le45\norm{v}^2,\qquad
\sum_y\ip{w}{e_y}^2\le45\norm{w}^2.
\end{equation}
More generally, if a family in a $d$-dimensional Euclidean space satisfies
this bound and is orthogonal to a $t$-dimensional subspace, its total
squared norm is at most $45(d-t)$. This follows by summing the bound over
an orthonormal basis of the orthogonal complement.

Both $E_6$ and $H_6$ in Table~\ref{tab:complements} contain an $A_5$.
It must lie in $E_A$, making the $A$-mass at most $45(11-5)=270$,
contrary to~\eqref{eq:masses}.
For $C=A_2\perp A_4$, the $A_4$ again lies in~$E_A$.
If the $A_2$ lies in $E_E$, the $E$-mass is at most $45(6-2)=180$.
If it lies in $E_A$, the $A$-mass is at most $45(11-6)=225$.
Both contradict~\eqref{eq:masses}.
Hence
\begin{equation}\label{eq:last-complement}
C=A_4\perp Q_{15}.
\end{equation}

\subsection{The binary projection trace}
Let $r_1,r_2$ be the specified basis of $Q_{15}$, so
$\norm{r_1}^2=\norm{r_2}^2=4$ and $\ip{r_1}{r_2}=1$.
Write $v=\pr_{E_E}r_1$ and $w=\pr_{E_E}r_2$.
The $A_4$ part of $W$ is contained in $E_A$, so only these two
vectors contribute to the projection of $W$ onto~$E_E$.
The tight frame identity, now used as an exact identity rather than an
inequality, gives
\begin{align}
\sum_y\norm{e_y}^2
&=45\bigl(6-\tr(\pr_{E_E}\pr_W|_{E_E})\bigr)\nonumber\\
&=270-3\bigl(4\norm{v}^2+4\norm{w}^2-2\ip{v}{w}\bigr).
\label{eq:binary-trace}
\end{align}
The factor $1/15$ in the projection trace is the reciprocal of the
binary Gram determinant: the inverse matrix is
$\frac1{15}\left(\begin{smallmatrix}4&-1\\-1&4\end{smallmatrix}\right)$.
Equating~\eqref{eq:binary-trace} with the $E$-mass $220$ determines the
otherwise unknown projection angle.

\begin{lemma}\label{lem:binary-values}
There are integers $x,y,z$ with
\[
x=3\norm{v}^2,\quad y=3\norm{w}^2,\quad z=3\ip{v}{w},
\]
such that
\begin{equation}\label{eq:binary-constraints}
x,y\in\{0,4,6,12\},\qquad z^2\le xy,\qquad
x=y=6\ \Longrightarrow\ 3\mid z.
\end{equation}
\end{lemma}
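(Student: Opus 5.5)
The plan is to read everything off the position of $r_1,r_2$ inside the three-component splitting $D_7\perp E_A\perp E_E$ furnished by Proposition~\ref{prop:root-system}. Since $r_1,r_2\in C\subset W$ and $W\perp\Q^7$, each $r_i$ has zero $D_7$ component. In the real space we therefore write $r_1=\pr_{E_A}r_1+v$, and similarly for $r_2$, so that
\[
\norm{\pr_{E_A}r_1}^2+\norm{v}^2=4 .
\]
Because $r_i\in\mathcal L$ pairs integrally with every root, the same argument used for $a_y,e_y$ places $\pr_{E_A}r_i$ in $A_{11}^*$ and $v,w$ in $E_6^*$.

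\emph{Integrality.} Every vector of $E_6^*$ has norm in $\tfrac23\Z$: even on the trivial class and $\equiv 4/3 \pmod{2\Z}$ on the nontrivial classes. Pairings of $E_6^*$ vectors lie in $\tfrac13\Z$, since $3E_6^*\subset E_6$. Hence $x=3\norm{v}^2$, $y=3\norm{w}^2$ and $z=3\ip vw$ are integers. The inequality $z^2\le xy$ is then just Cauchy--Schwarz.

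\emph{The value list.} Since $\norm v^2\le\norm{r_1}^2=4$, the admissible norms are $0,2,4$ in the trivial class and $4/3,10/3$ in a nontrivial class. This gives $x\in\{0,4,6,10,12\}$, and it remains to exclude $x=10$. In that case $\pr_{E_A}r_1$ is a vector of $A_{11}^*$ of norm $2/3$. By the short-vector description of $A_{11}^*$ in \S\ref{sec:moments}, a class $b\ne0$ has norm at least $b(12-b)/12\ge 11/12>2/3$, while the class $b=0$ consists of integral vectors of even norm. Either way this is a contradiction, so $x\ne10$, and the same argument gives $y\ne10$.

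\emph{Divisibility when $x=y=6$.} Now $\norm v^2=\norm w^2=2$. Since norms in the nontrivial classes are $\equiv4/3\pmod{2\Z}$, both $v$ and $w$ lie in the trivial class, that is, in $E_6$ itself. Their pairing is then an integer, so $3\mid z$.

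The only step needing genuine care is excluding $x=10$. It relies on the $A_{11}^*$ minimum bound $b(12-b)/12$ and on the vanishing of the $D_7$ component; everything else is bookkeeping with the $E_6$ discriminant form.
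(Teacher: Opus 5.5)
Your proposal is correct and follows the paper's proof essentially step for step. In both, the projections are placed in $A_{11}^*$ and $E_6^*$, the possible $E_6^*$ norms are listed as $0,4/3,2,10/3,4$, the value $10/3$ is excluded because it would leave an $A_{11}^*$ vector of norm $2/3<11/12$, and norm-$2$ vectors of $E_6^*$ are observed to lie in $E_6$. Your explicit remark that $r_i$ has no $D_7$ component, so that $\norm{\pr_{E_A}r_1}^2+\norm{v}^2=4$, only makes precise a step the paper leaves implicit.
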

\begin{proof}
The $E_6$ projections of $r_i\in\mathcal L$ belong to $E_6^*$.
Their norms are nonnegative and at most $4$, and are congruent to either
$0$ or $4/3$ modulo $2\Z$. The only preliminary possibilities are
$0,4/3,2,10/3,4$. A norm $10/3$ would leave an $A_{11}^*$ projection
of norm $2/3$, smaller than its least positive norm $11/12$.
Thus it is excluded. Three times any pairing in $E_6^*$ is integral,
and Cauchy--Schwarz gives the displayed inequality. Finally an $E_6^*$
vector of norm~$2$ lies in $E_6$ itself, so two such vectors have an
integral pairing; in the scaled notation $3\mid z$.
\end{proof}

\begin{proof}[Proof of Theorem~\ref{thm:main}]
Apply the preceding constructions to the hypothetical graph.
The type-$A$ alternative is excluded by Theorem~\ref{thm:noqsd}.
The trace bounds leave~\eqref{eq:last-complement}, and
\eqref{eq:binary-trace} becomes
\begin{equation}\label{eq:final-equation}
4x+4y-2z=50.
\end{equation}
Equivalently $z=2x+2y-25$.
Using $z^2\le xy$, the only pair from the four allowed values, up to
interchanging $x,y$, is $x=y=6$. For clarity the values of
$z^2-xy$ for the ten unordered pairs are
\[
\begin{array}{c|rrrrr}
(x,y)&(0,0)&(0,4)&(0,6)&(0,12)&(4,4)\\
z^2-xy&625&289&169&1&65\\ \hline
(x,y)&(4,6)&(4,12)&(6,6)&(6,12)&(12,12)\\
z^2-xy&1&1&-35&49&385
\end{array}
\]
For $x=y=6$, equation~\eqref{eq:final-equation} gives $z=-1$, contradicting
$3\mid z$. This completes the contradiction.
\end{proof}
\leanref{SRG266/Completion/MomentBounds.lean}{no_q15_projection_trace}

The last arithmetic check has only four possible values per norm. It is
not a substitute encoding of the removed shell searches: its inputs
are the projection norms of two specified complementary vectors, obtained
from the actual marked completion and its frame trace.

\section{The design obstruction without classification}\label{sec:design}

This section is independent of the rank-$24$ construction. Its input is
only a hypothetical quasi-symmetric design with the indicated parameters.

\begin{theorem}[Munemasa--Tonchev, with an internal alternative proof]
\label{thm:noqsd}
There is no quasi-symmetric $2$-$(56,12,9)$ design with block
intersection numbers $0$ and $3$.
\end{theorem}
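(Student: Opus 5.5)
The plan is to realize the hypothetical design as the residual of a $3$-design, build a triangle-free graph on $324$ vertices from it, and read a Steiner system inside one block whose replication number fails to be an integer. Let $\mathcal D$ be a quasi-symmetric $2$-$(56,12,9)$ design with intersection numbers $0,3$. Its replication number is $r=9\cdot55/11=45$ and it has $b=56\cdot45/12=210$ blocks. The standard quasi-symmetric identities show that the block graph (adjacency $=$ meeting in $3$ points) is strongly regular. A block meets exactly $12\cdot44/3=176$ others in three points, so it is disjoint from $33$ others. Its incidence matrix $N$ satisfies
\[
NN^{\mathsf T}=36I+9J,\qquad N^{\mathsf T}N=12I+3A_3,
\]
where $A_3$ is the block-graph adjacency; both are computable exactly as in the proof of Lemma~\ref{lem:type-a}.

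\emph{Step 1: the canonical biplane.} For each point $p$ of $\mathcal D$, I would construct an $11$-subset $\beta_p$ of the point set so that the $56$ sets $\beta_p$ form a biplane $2$-$(56,11,2)$. Adjoining a new point $\infty$ to every $\beta_p$ and keeping the $210$ blocks of $\mathcal D$ should then give a $3$-$(57,12,2)$ design with $266$ blocks, of which $56$ pass through $\infty$. The count checks: $2\binom{57}{3}/\binom{12}{3}=266$, and $\mathcal D$ is the residual at $\infty$. My first attempt is to define $\beta_p$ through the $0$-intersection structure at $p$, namely the blocks missing $p$ that are disjoint from a block through $p$. I would then verify the biplane axioms by the same matrix bookkeeping used in Lemma~\ref{lem:intersection}: show that a suitable symmetric $0/1$-type matrix $Z$ has zero diagonal and satisfies $Z^2=\alpha Z$ with $\alpha<0$, so that $Z=0$. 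This construction, which replaces the Hall--Connor embedding together with the classification of biplanes on $56$ points, is the step I expect to be the main obstacle. The difficulty is choosing the right candidate sets so that the intersection numbers are forced rather than assumed.

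\emph{Step 2: the Krein graph.} On $1+57+266=324$ vertices, join the new vertex $\ast$ to the $57$ points. Join each point to the blocks of the $3$-design containing it, so that a point has degree $1+56=57$. Join two blocks when they are disjoint, so that each block has $12$ point-neighbours and $45$ block-neighbours. Using the intersection numbers of the $3$-$(57,12,2)$ design, namely $\{0,3\}$ among residual blocks and the numbers inherited from the biplane elsewhere, I would check triangle-freeness and $\mu=12$ case by case. This yields a structured $\SRG(324,57,0,12)$, which is the special case claimed in the introduction.

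\emph{Step 3: the Steiner system and the contradiction.} Fix a block vertex $B$ and apply the local algebra of a triangle-free graph with $\mu=12$ at $B$, exactly as Lemma~\ref{lem:local-design} did at $o$. The non-neighbours of $B$ at distance two index blocks that meet $B$. The $\mu$-counts, combined with the positive-semidefinite Gram argument of Lemma~\ref{lem:intersection}, should show that the traces on the $12$ points of $B$ of a distinguished family of second-subconstituent vertices are $4$-sets, and that any three points of $B$ lie in exactly one of them. The mechanism behind this is that a triple lies in exactly two blocks of the $3$-design, $B$ and one other. The result is a Steiner $3$-$(12,4,1)$ system on the points of $B$. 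Counting the $4$-sets through a fixed point gives $3r=\binom{11}{2}=55$, which has no integer solution, and this contradiction proves Theorem~\ref{thm:noqsd}.
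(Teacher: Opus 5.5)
Your architecture matches the paper's: extend $\mathcal D$ by a biplane to a $3$-$(57,12,2)$ design, build the point--block $\SRG(324,57,0,12)$, find a Steiner $3$-$(12,4,1)$ on a $12$-set of common neighbours, and conclude from $3r=55$. Step~2 is fine. However, the two steps that carry the content are missing, and the mechanisms you suggest for them fail.

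\textbf{Step 1.} Your candidate is degenerate. A block $C\not\ni p$ meets $12\cdot 9/3=36$ of the $45$ blocks through $p$, so it is disjoint from $9$ of them. Hence ``blocks missing $p$ that are disjoint from a block through $p$'' is simply all $165$ blocks avoiding $p$. That is a set of blocks, not an $11$-set of points. Indexing the biplane blocks by points also presupposes a canonical point--block bijection that nothing supplies. A $Z^2=\alpha Z$ identity can only confirm intersection numbers of sets already defined; it cannot produce them.

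The paper's construction is combinatorial:
\begin{itemize}
\item At each $p$ the multiplicities $m_p(q,r)$ lie in $\{1,2\}$, so the link is an $\SRG(55,18,9,4)$. This is identified internally with $T(11)$, giving coordinates $\eta_p$ with $\kappa(p,q,r)=2-|\eta_p(q)\cap\eta_p(r)|$.
\item The complemented closed stars lie in the ternary code of $\mathcal D$, and self-orthogonality gives $|S_{p,a}\cap S_{q,b}|\equiv 2\pmod 3$.
\item A re-rooting argument shows that stars built at different points coincide. Exactly $56$ distinct stars then form the biplane, with cross intersections $\{0,3\}$ and the triple identity~\eqref{eq:biplane-triple}.
\end{itemize}

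\textbf{Step 3} fails as stated. The vertices of $K$ are $*$, points and blocks. Among the $12$ points of $B$, $*$ has $12$ neighbours, a point has none, and a block $C$ has $|C\cap B|\in\{0,3,12\}$. So no vertex of $K$ has a $4$-set trace on $B$, and the second block through a triple $t\subset B$ traces out $t$ itself. The real problem is to attach a fourth point to each triple of $B$ \emph{consistently}. Neither the $\mu$-counts nor a positive-semidefinite Gram argument does this.

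The paper proceeds as follows:
\begin{itemize}
\item It derives from the $3$-design property that every independent triple $\tau$ of $K$ has exactly three common neighbours, so $\Delta=\tau\cup\tau^\circ$ induces $K_{3,3}$.
\item Sorting the remaining vertices by their number of neighbours in $\Delta$ yields cubic graphs on $\Delta\cup X_0$ and on $X_2$, each with $54$ vertices, satisfying~\eqref{eq:cubic-gram}.
\item Equal fourth traces force a four-cycle in $X_2$, and local counts upgrade it to a unique companion $K_{3,3}$.
\item Hence each independent triple lies in a unique crown ($K_{6,6}$ minus a perfect matching). The Steiner blocks are the sets $\mathcal W\cap\Omega$ for crowns $\mathcal W$, with $\Omega=\Gamma_K(a)\cap\Gamma_K(b)$.
\end{itemize}
Your choice $a=*$, $b=B$ is a special case of this. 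These $4$-sets are not neighbourhoods of single vertices. Without the crown structure, or some substitute that completes triples of $B$ to $4$-sets compatibly, your Step~3 has no mechanism.
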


The statement is due to Munemasa and Tonchev \cite{MunemasaTonchev2020}.
We give the argument used to prove it inside the formalization.
The central point is that local triangular reconstructions can be made
compatible across all $56$ choices of a deleted point. The resulting
canonical biplane extends the hypothetical design by one point. A local
graph obstruction then replaces the biplane and code classifications.

\subsection{Local triangular graphs}
Let $\mathcal D$ be the hypothetical design on a point set $P_0$.
It has $210$ blocks and replication number~$45$.
Fix $p\in P_0$. The $45$ blocks through $p$, with $p$ deleted,
have size~$11$, pairwise intersection~$2$, and replication~$9$
on the remaining $55$ points. Their dual is a $2$-$(45,9,2)$ design.
For $q,r\ne p$ let $m_p(q,r)$ count blocks through $p,q,r$;
set $m_p(q,q)=9$.
Double counting gives
\[
\sum_{r\ne p,q}m_p(q,r)=90,\qquad
\sum_{r\ne p,q}m_p(q,r)^2=162.
\]
Since $(t-1)(t-2)\ge0$ for every nonnegative integer $t$, the sum
of these expressions over $54$ points is
$162-3\cdot90+2\cdot54=0$. Every off-diagonal multiplicity is
therefore $1$ or~$2$.

Join $q,r$ when $m_p(q,r)=1$. There are $18$ neighbours of each
vertex, and the mixed moment
\[
\sum_{s\ne p}m_p(q,s)m_p(r,s)=162+9m_p(q,r)
\]
gives $14-5m_p(q,r)$ common neighbours. The link graph thus has
parameters $(55,18,9,4)$.
It is the triangular graph $T(11)$. The needed uniqueness result
\cite{Connor1958} is proved internally: its neighbourhoods split into
two $9$-cliques with a matching between them; the resulting grand
$10$-cliques number $11$, each vertex belongs to two, and pairs of
grand cliques meet in one vertex. Assigning to a vertex its two grand
cliques gives the required edge coordinates.
Choose an identification
\[
\eta_p:P_0\setminus\{p\}\longrightarrow\binom{[11]}2.
\]
For distinct $p,q,r$ it satisfies
\begin{equation}\label{eq:triple-link}
\kappa(p,q,r):=\#\{B\in\mathcal D:p,q,r\in B\}
=2-|\eta_p(q)\cap\eta_p(r)|.
\end{equation}
The left side is symmetric in the three points, providing compatibility
between the different choices of local coordinates.

\subsection{Ternary self-orthogonality glues the stars}
For $a\in[11]$ define the closed star
\[
S_{p,a}=\{p\}\cup\{q\ne p:a\in\eta_p(q)\}.
\]
It has $11$ points. Let $\mathcal C\subseteq\F_3^{P_0}$ be the code
spanned by the incidence vectors of the original blocks.
Because both block size and all block intersections are divisible by~$3$,
\begin{equation}\label{eq:self-orthogonal}
\mathcal C\subseteq\mathcal C^\perp.
\end{equation}
The following observation uses this code without enumerating its words.

\begin{lemma}\label{lem:stars}
Every complemented star $\one-\one_{S_{p,a}}$ lies in $\mathcal C$.
The closed stars, after duplicates are identified, are the blocks of a
canonical $2$-$(56,11,2)$ biplane $\mathcal B$.
They satisfy
\begin{align}
|B\cap S|&\in\{0,3\}\quad(B\in\mathcal D,\ S\in\mathcal B),
\label{eq:biplane-cross}\\
\kappa(p,q,r)+\#\{S\in\mathcal B:p,q,r\in S\}&=2
\quad(p,q,r\text{ distinct}).\label{eq:biplane-triple}
\end{align}
\end{lemma}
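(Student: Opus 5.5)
The plan has three stages. First, put the complemented stars into $\mathcal C$ by explicit $\F_3$-combinations. Second, use the symmetry of $\kappa$ in~\eqref{eq:triple-link} to identify stars seen from different base points, which gives the biplane. Third, read off~\eqref{eq:biplane-cross} and~\eqref{eq:biplane-triple} from self-orthogonality~\eqref{eq:self-orthogonal} and the link formula.

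\emph{Stage 1: the star vectors.} Fix $p$ and let $M_p$ be the $45\times55$ incidence matrix of the derived blocks through $p$. By~\eqref{eq:triple-link},
\[
M_p^{\mathsf T}M_p=9I+(2J-2I)-A_{T(11)}=7I+2J-A_{T(11)}
\]
in the coordinates $\eta_p$. Write $\chi_a$ for the indicator of the ten edges of $K_{11}$ containing $a$. A direct count in $T(11)$ gives $A_{T(11)}\chi_a=7\chi_a+2(\one-\chi_a)$. Hence $M_p^{\mathsf T}M_p\chi_a$ is an explicit combination of $\chi_a$ and $\one$, and similarly $M_p^{\mathsf T}M_p e_q$ is explicit for each $q$.

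Reducing mod $3$, I would take an $\F_3$-combination of blocks through $p$ whose restriction to $P_0\setminus\{p\}$ is $\pm(\one-\chi_a)$, or $\pm\chi_a$ plus a multiple of $\one$. Concretely, I would try the blocks through $p$ meeting a fixed point $q$ with $a\in\eta_p(q)$, and then sum over the $10$ such $q$. The $p$-coordinate is then adjusted using the block counts ($45\equiv0$ and $9\equiv0$). Blocks avoiding $p$ may also be needed to correct the constant part. Here I would use the sum of all $210$ blocks, equal to $45\one\equiv0$, and the sum of blocks through a fixed point, to control multiples of $\one$ and of $e_p$.

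I expect this bookkeeping to be the main obstacle: producing the all-ones correction with zero $p$-coordinate. If the direct combination fails, my fallback is a dual argument. I would show $\one-\one_{S_{p,a}}\in(\mathcal C^\perp)^\perp=\mathcal C$ by checking orthogonality against $\mathcal C^\perp$. This needs a description of $\mathcal C^\perp$ from the blocks through $p$ together with $\eqref{eq:self-orthogonal}$.

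\emph{Stage 2: the biplane.} Each closed star has $1+10=11$ points. A pair $\{p,q\}$ lies in exactly two stars based at $p$, namely $S_{p,a}$ for the two letters $a\in\eta_p(q)$.

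To identify stars across base points, let $q\in S_{p,a}$ with $q\ne p$. I would show that $S_{p,a}=S_{q,b}$ for the letter $b\in\eta_q(p)$ determined by the symmetric quantity $\kappa(p,q,r)$. For $r\ne p,q$, the condition $r\in S_{p,a}$ reads $\kappa(p,q,r)=1$ together with a choice of the common letter. That choice is pinned down by the two stars through $\{p,q\}$, so symmetry of $\kappa$ transports the condition to base point $q$.

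Once stars are identified, every pair lies in exactly two distinct stars. Counting then gives $2\binom{56}{2}/\binom{11}{2}=56$ blocks, so the stars form a $2$-$(56,11,2)$ biplane $\mathcal B$.

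\emph{Stage 3: the two identities.} For~\eqref{eq:biplane-cross}, Stage 1 and~\eqref{eq:self-orthogonal} give $\ip{\one_B}{\one-\one_S}\equiv0\pmod 3$. Since $|B|=12$, this means $|B\cap S|\equiv0\pmod3$. If $B$ meets $S=S_{p,a}$ with $p\in B$, then $B\setminus\{p\}$ gives $11$ edges, and $|B\cap S|-1$ counts those containing $a$. I would bound this using $\kappa\le2$ and the fact that $m_p\in\{1,2\}$ on pairs. The aim is to show $|B\cap S|\le3$, which leaves only $\{0,3\}$.

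For~\eqref{eq:biplane-triple}, base the count at $p$. The stars through $p,q,r$ are those $S_{p,a}$ with $a\in\eta_p(q)\cap\eta_p(r)$, so their number is $|\eta_p(q)\cap\eta_p(r)|$. Adding this to~\eqref{eq:triple-link} gives exactly $2$.
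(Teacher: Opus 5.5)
Your outline follows the paper's three stages, but the two steps that carry the weight are not supplied. The explicit Stage~1 combination you propose is also the zero word.

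\textbf{Stage 1.} Your formula should read $A_{T(11)}\chi_a=9\chi_a+2(\one-\chi_a)=7\chi_a+2\one$, so $M_p^{\mathsf T}M_p\chi_a=18\one\equiv0$. The $p$-coordinate of your sum is $10\cdot9=90$. Hence summing, over the ten $q$ with $a\in\eta_p(q)$, the blocks through $p,q$ gives $0$ in $\F_3^{P_0}$. Your correction words $45\one$ and $45e_p+9(\one-e_p)$ also vanish.

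The ``all-ones correction with zero $p$-coordinate'' cannot be produced at all. The vectors $\one-e_p$ and $\one_{S_{p,a}\setminus\{p\}}$ have norms $55$ and $10$, nonzero mod $3$, so neither lies in the self-orthogonal code $\mathcal C$; only their difference can.

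The missing step uses the columns $M_p^{\mathsf T}M_pe_q$ you already wrote down. If $\eta_p(q)=\{a,b\}$, the sum of the blocks through $p,q$ is, modulo $3$, $2(\one-e_p)-\one_{S_{p,a}\setminus\{p\}}-\one_{S_{p,b}\setminus\{p\}}$; its $p$- and $q$-coordinates are $9\equiv0$. The signed combination of the three such words for the edges $\{a,b\},\{a,c\},\{b,c\}$ is $2(\one-\one_{S_{p,a}})$. Your duality fallback is not yet an argument.

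\textbf{Stage 2: re-rooting.} This is the more serious gap. Take $\eta_p(q)=\{a,a'\}$ and $\eta_q(p)=\{c,c'\}$. Symmetry of $\kappa$ only shows that the eighteen-point set $\{r:\kappa(p,q,r)=1\}$ equals both $(S_{p,a}\cup S_{p,a'})\setminus\{p,q\}$ and $(S_{q,c}\cup S_{q,c'})\setminus\{p,q\}$.

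How this set splits into two stars is read at base $p$ from the values $\kappa(p,r,s)$, but at base $q$ from $\kappa(q,r,s)$. These are different triples, so saying the choice is ``pinned down'' presupposes the conclusion. In fact $S_{p,a}=S_{q,b}$ for some $b$ is equivalent to $\kappa(q,r,s)=1$ for all $r,s\in S_{p,a}\setminus\{p,q\}$. These triples avoid $p$, and the link at $p$ says nothing about them.

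The paper closes this gap with the code, which your Stage~2 never uses:
\begin{itemize}
\item Self-orthogonality applied to two complemented stars gives $|S_{p,a}\cap S_{q,b}|\equiv2\pmod3$.
\item In $q$-coordinates, the ten edges $\eta_q(r)$, $r\in S_{p,a}\setminus\{q\}$, all meet $\eta_q(p)$. Letter $b$ has degree $|S_{p,a}\cap S_{q,b}|-1\equiv1$ among them.
\item A letter off $\eta_q(p)$ has degree at most $2$, hence exactly $1$. So two noncentral edges meet exactly when they attach to the same end of $\eta_q(p)$.
\item This two-colouring yields a parity identity among the values $\kappa(q,r,s)$ for $r,s\in S_{p,a}\setminus\{p,q\}$.
\item For a triple in $S_{p,a}\setminus\{p\}$ plus a fourth point, the identities at the triple's three points force $\kappa=1$. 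Hence every triple of $S_{p,a}$ has $\kappa=1$.
\item The ten edges at base $q$ then pairwise meet, so they form a star.
\end{itemize}
Your pair count, the total of $56$ stars, and both parts of Stage~3 all presuppose that every star through $p$ is some $S_{p,c}$. That is exactly this re-rooting.

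\textbf{Stage 3.} The bound you need is that each letter has degree $d_a=2$ among the eleven edges of $B\setminus\{p\}$. The other $44$ blocks through $p$ meet $B$ in three points each, so exactly $44$ of the $55$ pairs have $\kappa=2$. Hence $\sum_a\binom{d_a}2=11$ with $\sum_a d_a=22$, which forces every $d_a=2$. This gives $|B\cap S_{p,a}|=3$ directly, so your mod-$3$ step is not needed.
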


\begin{proof}
For $q\ne p$ take the sum in $\F_3^{P_0}$ of the original block
vectors containing $p,q$. Its coordinate at $x$ is $\kappa(p,q,x)$,
with value $9=0$ at $x=p,q$. If $\eta_p(q)=\{a,b\}$,
equation~\eqref{eq:triple-link} writes this word as
$2j_p-s_{p,a}-s_{p,b}$, where $j_p$ is $1$ outside $p$ and
$s_{p,a}$ is the open-star vector. Combining the three words for
$\{a,b\},\{a,c\},\{b,c\}$ gives $j_p-s_{p,a}$ in the code.
This is the complemented closed-star vector.
Self-orthogonality now implies
\begin{equation}\label{eq:star-mod3}
|S_{p,a}\cap S_{q,b}|\equiv2\pmod3.
\end{equation}

We detail why a closed star can be re-rooted at any one of its points;
this prevents treating unrelated local biplanes as a single object.
Let $S=S_{p,a}$ and $q\in S\setminus\{p\}$.
In the $q$-coordinates, the ten edges representing $S\setminus\{q\}$,
including the central edge itself, all meet the central edge
representing~$p$. By~\eqref{eq:star-mod3},
every coordinate degree of this ten-edge set is $1$ modulo~$3$.
A vertex outside the central edge has degree at most~$2$, hence degree
exactly~$1$. Thus two noncentral edges meet precisely when they attach
to the same end of the central edge. For triples on
$S\setminus\{p\}$ this gives the two-colour link parity identity.
On four distinct points, write $A,B,C,D$ for the assertions that each
of its four triples has multiplicity~$1$. The three link identities
are $A\leftrightarrow(B\leftrightarrow C)$,
$A\leftrightarrow(B\leftrightarrow D)$, and
$A\leftrightarrow(C\leftrightarrow D)$; together they force $A$.
Using a fourth point of $S\setminus\{p\}$ proves that every triple
in $S$ has multiplicity~$1$.
Consequently, in coordinates at any $q\in S$, all ten edges are
pairwise intersecting. More than three pairwise intersecting edges of
a complete graph form a star. Hence $S=S_{q,b}$ for some~$b$.

Two closed stars have nonempty intersection by~\eqref{eq:star-mod3}.
Re-rooting at a common point shows that distinct ones meet in exactly
two points. At each point there are exactly $11$ stars, and each pair
of points lies in two. Counting point--star incidences gives $56$
distinct stars. These are the stated biplane.

For a block $B$ containing $p$, its $11$ edges in the $p$-coordinates
have degree~$2$ at each of the $11$ coordinates. This follows from the
local incidence moments (and is proved before any star is glued).
If $B\cap S$ is nonempty, re-root at $p\in B\cap S$; the intersection
then consists of $p$ and the two edges at the star centre, giving~$3$.
Finally the number of biplane blocks containing $p,q,r$ is
$|\eta_p(q)\cap\eta_p(r)|$, so~\eqref{eq:biplane-triple} follows
from~\eqref{eq:triple-link}.
\end{proof}
\leanref{SRG266/Design/CanonicalBiplane.lean}{canonicalBiplane_triple_complement}

Adjoin a new point $\infty$ and extend the $56$ biplane blocks by it,
retaining the $210$ original blocks. Equations
\eqref{eq:biplane-cross}--\eqref{eq:biplane-triple} give a
$3$-$(57,12,2)$ design $\mathcal T$ with $266$ blocks and block
intersections $0,3$. Triples involving $\infty$ use the biplane pair
equation; the others use~\eqref{eq:biplane-triple}.
Its replication number is $56$ and its pair multiplicity is~$11$.
This is a construction from $\mathcal D$, not an appeal to a classification
of all biplanes or of their ternary codes.

\subsection{The associated Krein graph}
We use the classical point--block construction for triangle-free strongly
regular graphs; see \cite[Chapter~5]{CameronVanLint1980} and the proof of
the Krein-graph characterization in \cite{ChenMakhnevNirova2023}.
Construct a graph $K$ on the disjoint union of one new vertex $*$,
the $57$ points of $\mathcal T$, and its $266$ blocks. Join $*$ to
all points, join a point to its incident blocks, and join two blocks
when they are disjoint. There are no other edges.
If $M_1$ is the incidence matrix and $A_{\mathrm{disj}}$ is the block
disjointness matrix, then
\begin{align*}
M_1M_1^{\mathsf T}&=45I+11J,\qquad
M_1^{\mathsf T}M_1=9I+3J-3A_{\mathrm{disj}},\\
M_1A_{\mathrm{disj}}&=12(J-M_1).
\end{align*}
These equations give, for the adjacency matrix $A_K$,
\begin{equation}\label{eq:krein-srg}
A_K^2=45I-12A_K+12J.
\end{equation}
Thus $K$ is an $\SRG(324,57,0,12)$.
The triple-design equation further gives
\begin{equation}\label{eq:independent-triple}
|\Gamma_K(a)\cap\Gamma_K(b)\cap\Gamma_K(c)|=3
\quad\text{for every independent triple }\{a,b,c\}.
\end{equation}
For example, if all three vertices are points the common neighbours
are $*$ and their two incident triple blocks. The other vertex types
follow by expanding products of incidence entries and using the same
triple equation; these expansions are formalized in
\path{SRG266/Design/Krein/TripleMoments.lean}.

We now use only the local identities available for this constructed graph.
For an independent triple $\tau$, put
$\tau^\circ=\bigcap_{v\in\tau}\Gamma_K(v)$.
Then $\tau^\circ$ is an independent
triple and $(\tau^\circ)^\circ=\tau$. The six vertices
$\Delta=\tau\cup\tau^\circ$ induce $K_{3,3}$.
Every vertex outside $\Delta$ has $0$, $1$, or $2$ neighbours in it:
it cannot meet both sides without a triangle, and meeting all three
vertices of one side would put it in the other side.
Partition these outside vertices into $X_0,X_1,X_2$ by this number.

\begin{lemma}[The cubic pair]\label{lem:cubic-pair}
One has $|X_0|=48$, $|X_1|=216$, and $|X_2|=54$.
The induced graphs on $U_0=\Delta\cup X_0$ and $U_2=X_2$ are
cubic graphs on $54$ vertices. Each vertex has $18$ neighbours in the
other set. If $A_0,A_2$ are their adjacency matrices and $B$ is the
cross-incidence matrix, then
\begin{equation}\label{eq:cubic-gram}
BB^{\mathsf T}=(A_0-3I)^2+6J,\qquad
B^{\mathsf T}B=(A_2-3I)^2+6J.
\end{equation}
\end{lemma}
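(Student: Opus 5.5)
The plan is to run everything as double counting from the adjacency identity \eqref{eq:krein-srg} and the triple identity \eqref{eq:independent-triple}, working relative to the fixed $K_{3,3}$ on $\Delta=\tau\cup\tau^\circ$. For an outside vertex $x$ let $n_x\in\{0,1,2\}$ be its number of neighbours in $\Delta$.

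\emph{Step 1 (sizes).}
\begin{itemize}
\item Each vertex of $\Delta$ has $57-3=54$ neighbours outside $\Delta$, so $\sum_x n_x=324$.
\item Pairs in $\Delta$ on opposite sides are adjacent, hence have no common neighbours since $\lambda=0$.
\item Each of the six same-side pairs is nonadjacent. Its $\mu=12$ common neighbours include the three vertices of the other side, leaving $9$ outside $\Delta$.
\item Hence $\sum_x\binom{n_x}{2}=54$, which gives $|X_2|=54$.
\item Then $|X_1|=324-2\cdot54=216$, and $|X_0|=318-216-54=48$.
\end{itemize}

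\emph{Step 2 (local degrees).} Fix $x$ in each class and count its neighbours in $\Delta,X_0,X_1,X_2$. The tools are the following.
\begin{itemize}
\item The $\mu$-counts of $x$ against the six vertices of $\Delta$.
\item The triple identity applied to independent triples $\{x,d,d'\}$ with $d,d'$ on one side of $\Delta$, whenever $x$ is nonadjacent to both.
\end{itemize}
As an example, take $x\in X_2$ adjacent to $a,b\in\tau$. Then $x$ is nonadjacent to all of $\tau^\circ$. Each triple $\{x,a',b'\}\subset\{x\}\cup\tau^\circ$ has common neighbours $a$, $b$ and exactly one further vertex. That further vertex lies outside $\Delta$ and belongs to $X_2$, being adjacent to two vertices of $\tau^\circ$.

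Summing such relations should give that $x$ has exactly $3$ neighbours in $X_2$ and $18$ in $U_0=\Delta\cup X_0$. Symmetrically, vertices of $\Delta$ and of $X_0$ should have $3$ neighbours in $U_0$ and $18$ in $X_2$. For a vertex of $\Delta$ this is immediate: its $3$ neighbours in $U_0$ are the opposite side of $\Delta$, and its $18$ neighbours in $X_2$ follow from Step 1, since $54$ outside neighbours split as $9$ common neighbours per same-side pair, with two pairs per vertex.

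For $X_0$ the same triple counts are used with $\tau$ and $\tau^\circ$ in turn. This yields that $U_0$ and $U_2$ each induce cubic graphs and that the cross degree is $18$.

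\emph{Step 3 (Gram identities).} For $u,v\in U_0$ I split the common-neighbour count $(A_K^2)_{uv}$ as
\[
(A_0^2)_{uv}+(BB^{\mathsf T})_{uv}+c_1(u,v),
\]
where $c_1(u,v)$ counts common neighbours in $X_1$. The identity \eqref{eq:cubic-gram} is then equivalent to
\[
c_1(u,v)=(A_K^2)_{uv}-15+6(A_0)_{uv}-9\delta_{uv}-6,
\]
that is, an explicit affine function of $(A_0)_{uv}$, $(A_K)_{uv}$ and $\delta_{uv}$. The same holds for $X_2$ with $A_2$.

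To compute $c_1(u,v)$ I would apply the triple identity to $\{u,v,d\}$ for each $d\in\Delta$ nonadjacent to both $u$ and $v$. Each such triple has exactly $3$ common neighbours, and one must identify where those neighbours lie. Summing over $d$ counts common neighbours of $u,v$ weighted by their adjacency to $\Delta$, which separates the $X_1$ and $X_2$ contributions. Combined with $\lambda=0$, $\mu=12$, this gives $c_1$. The case of $u,v$ in $\Delta$ or $X_0$ requires splitting by the sides of $\Delta$ to which they are attached.

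\emph{Main obstacle.} I expect the main obstacle to be this last bookkeeping. The triple identity only controls triples that are independent, and vertices of $X_1$ and $X_2$ are attached to a specific side. The counts therefore split into many subcases ($u,v$ in $\Delta$ or $X_0$, adjacent or not, on which sides). I would handle it by first proving the fixed-vertex degree profile of Step 2 in each class, then obtaining the pairwise identity by subtracting the two moments $\sum_d$ over $d\in\tau$ and $d\in\tau^\circ$.
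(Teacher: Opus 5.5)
Your Steps 1 and 2 follow the paper's argument. Vertex, edge and same-side-pair counts give the sizes. The $\mu$-counts against $\Delta$ and the triple identity, applied with one vertex fixed, give the degrees $3$ and $18$. In Step 2 you must also note that, by triangle-freeness, a vertex of $X_2$ has no $X_2$-neighbour attached to its own side of $\Delta$; with that, the counts go through.

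The gap is in Step 3. With target $BB^{\mathsf T}=A_0^2-6A_0+9I+6J$, the correct reformulation is
\[
c_1(u,v)=(A_K^2)_{uv}-2(A_0^2)_{uv}+6(A_0)_{uv}-9\delta_{uv}-6,
\]
and $2(A_0^2)_{uv}$ is not a constant. Your formula cannot hold: it gives $c_1=-9$ for distinct nonadjacent pairs, $-15$ for adjacent pairs, and $27$ instead of $36$ on the diagonal. So the plan to ``compute $c_1$'' cannot succeed, because $c_1$ is not determined by $(A_0)_{uv}$ and $\delta_{uv}$. For nonadjacent $u,v\in X_0$ the true relation is $c_1=6-2|\Gamma(u)\cap\Gamma(v)\cap X_0|$, the identity reused in Lemma~\ref{lem:companion}. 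For distinct nonadjacent $u,v\notin\Delta$, $\mu$ and your weighted triple sums give only two equations in four unknowns:
\[
c_0+c_1+c_2+c_\Delta=12,\qquad c_1+2c_2+3c_\Delta=3|\Delta'|.
\]
Here $c_0,c_2,c_\Delta$ count common neighbours in $X_0,X_2,\Delta$, and $\Delta'$ is the set of vertices of $\Delta$ adjacent to neither $u$ nor $v$. Triangle-freeness places every $\Delta$-neighbour of a common neighbour in $\Delta'$.

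The repair, which is what the paper does, is to eliminate $c_1$ rather than solve for it. Subtracting gives
\[
c_2-c_0-c_\Delta=3(|\Delta'|-c_\Delta)-12 .
\]
One has $|\Delta'|-c_\Delta=6$ for $u,v\in X_0$. For $u,v\in X_2$ one has $|\Delta'|-c_\Delta=2$ in each attachment configuration: the same pair, two different pairs on one side, or opposite sides. These give exactly the paper's relations $|\Gamma(u)\cap\Gamma(v)\cap U_{2-i}|=6+|\Gamma(u)\cap\Gamma(v)\cap U_i|$.

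Nonadjacent pairs meeting $\Delta$ are checked directly: $9=6+3$ for a same-side pair in $\Delta$, and $6=6+0$ for $u\in\Delta$, $v\in X_0$. Adjacent pairs have no common neighbours at all. The diagonal entries $18$ and $3$ come from Step 2. These three cases are precisely \eqref{eq:cubic-gram}.
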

\begin{proof}
Counting vertices, edges to $\Delta$, and common-neighbour pairs in
$\Delta$ gives
\[
|X_0|+|X_1|+|X_2|=318,\quad |X_1|+2|X_2|=324,
\quad |X_2|=54.
\]
For the last count there are six nonadjacent pairs in $\Delta$,
each with $12$ common neighbours, of which three are inside $\Delta$.
The stated sizes follow.
The analogous counts with one vertex fixed, using
\eqref{eq:independent-triple}, give the local degrees $3$ and~$18$.
For distinct nonadjacent $x,y\in U_i$, they also give
\[
|\Gamma(x)\cap\Gamma(y)\cap U_{2-i}|
=6+|\Gamma(x)\cap\Gamma(y)\cap U_i|.
\]
For adjacent vertices both common-neighbour counts vanish; on the
diagonal they are $18$ and $3$. These three cases are exactly
\eqref{eq:cubic-gram}.
\end{proof}

Taking traces of the squares in~\eqref{eq:cubic-gram} gives
$\tr A_0^4=\tr A_2^4$. Indeed the two Gram products have equal square
traces, while both cubic graphs have $54$ vertices, trace zero,
second trace $162$, and third trace zero.
A cubic graph on $n$ vertices without four-cycles has fourth trace
$15n$. The graph on $U_0$ contains $\Delta=K_{3,3}$, so its fourth
trace is strictly greater than $15\cdot54$. The graph on $X_2$ must
therefore contain a four-cycle.

\begin{lemma}[Unique companion]\label{lem:companion}
There is exactly one induced $K_{3,3}$ whose vertices lie in $X_2$.
Together with $\Delta$ it induces $K_{6,6}$ minus a perfect matching.
\end{lemma}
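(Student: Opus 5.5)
The plan is to first pin down the labelling of $X_2$ by $\Delta$, then obtain one companion $K_{3,3}$ from a four-cycle, and finally get uniqueness by counting four-cycles exactly with the trace identity $\tr A_0^4=\tr A_2^4$ from Lemma~\ref{lem:cubic-pair}.

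\emph{Labels.} A vertex $x\in X_2$ cannot meet both $\tau$ and $\tau^\circ$, since that would create a triangle. So its two neighbours in $\Delta$ are two vertices of one side. It cannot be adjacent to the third vertex of that side, because it would then lie in the opposite side. Hence $x$ is labelled by the unique vertex of $\tau$ or of $\tau^\circ$ that it misses. Each nonadjacent pair in $\Delta$ has $12$ common neighbours, three of them inside $\Delta$. This gives nine vertices of $X_2$ per label, accounting for the $54$ vertices. Next I would use \eqref{eq:independent-triple} on independent triples mixing $\Delta$ and $X_2$ to constrain adjacency in $X_2$. For example, two vertices labelled by $\tau$-vertices cannot be adjacent when their neighbour pairs in $\tau$ overlap, by triangle-freeness: they share a common neighbour in $\tau$. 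So the cubic graph on $X_2$ is bipartite between the $\tau$-labelled and the $\tau^\circ$-labelled vertices.

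\emph{Existence.} The paragraph preceding the lemma supplies a four-cycle $x,y,x',y'$ in $X_2$. The triple $\{y,y',\cdot\}$ is completed using \eqref{eq:independent-triple}: the pair $\{x,x'\}$ is nonadjacent with common neighbours $y,y'$ in $U_2$. By \eqref{eq:cubic-gram} it therefore has $6+2=8$ common neighbours in $U_0$. Together with the label structure, the common neighbours in $\Delta$ should force a third vertex $x''\in X_2$. I then close $\sigma=\{x,x',x''\}$ and $\sigma^\circ$ into an induced $K_{3,3}$, using the involution $\sigma\mapsto\sigma^\circ$ on independent triples. Checking labels then shows that the three vertices of $\sigma$ miss distinct vertices of one side of $\Delta$. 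Thus $\Delta\cup\sigma\cup\sigma^\circ$ is $K_{6,6}$ with bipartition $(\tau\cup\sigma^\circ,\ \tau^\circ\cup\sigma)$, minus the perfect matching pairing each vertex with its label.

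\emph{Uniqueness.} In a triangle-free cubic graph on $n$ vertices, $\tr A^4=15n+8c_4$, where $c_4$ is the number of four-cycles. The identity $\tr A_0^4=\tr A_2^4$ therefore equates the four-cycle counts of $U_0$ and $X_2$. The step I expect to be hardest is showing that the only four-cycles of the cubic graph on $U_0$ are the nine inside $\Delta=K_{3,3}$. Since $\Delta$ is $3$-regular in itself, vertices of $\Delta$ have no neighbours in $X_0$ within $U_0$. Hence the induced graph on $U_0$ is $\Delta$ together with a cubic graph on $X_0$. I would first try to show that this cubic graph on $X_0$ has no four-cycles. A four-cycle there gives a nonadjacent pair with two common neighbours in $U_0$, and \eqref{eq:cubic-gram} together with \eqref{eq:independent-triple} should produce a count incompatible with the sizes $48,216,54$. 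Granting this, $X_2$ has exactly nine four-cycles. The companion $K_{3,3}$ already contains nine four-cycles, so every four-cycle of $X_2$ lies in it. Any other induced $K_{3,3}$ in $X_2$ would contribute further four-cycles, so the companion is unique.
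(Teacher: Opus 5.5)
\emph{Existence.} The decisive step is missing. After producing the four-cycle $x,y,x',y'$ in $X_2$, you say the common neighbours in $\Delta$ ``should force'' a third common neighbour. Nothing local to $\Delta$ does this. Take $x,x'$ labelled by $\tau$, and let $\nu,d$ count their common neighbours in $X_2$ and in $\Delta$. Then \eqref{eq:cubic-gram} gives $6+\nu$ common neighbours in $U_0$, hence $6-2\nu$ in $X_1$. Summing \eqref{eq:independent-triple} over the triples $\{x,x',t\}$ with $t\in\tau^\circ$ gives only $9=3d+2\nu+a$ with $0\le a\le 6-2\nu$. This forces $d=1$ but is satisfied by $\nu=2$ as well as by $\nu=3$. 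The idea you need is the paper's disjointness count. Each vertex of $X_2$ has $18-2=16$ neighbours in $X_0$, so $Z_{xx'}=(\Gamma(x)\cup\Gamma(x'))\cap X_0$ has $26+d-\nu$ elements. Triangle-freeness makes $Z_{xx'}$ and $Z_{yy'}$ disjoint, and $\nu=2$ on either diagonal would give at least $25+24>48=|X_0|$. So both diagonals have three common neighbours in $X_2$, and cubicity then closes the cycle into a $K_{3,3}$ component. Without this, your $\sigma$ and $\sigma^\circ$ are not known to exist inside $X_2$.

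\emph{Uniqueness.} Counting four-cycles through $\tr A_0^4=\tr A_2^4$ is a legitimate alternative to the paper's route. However, its key claim, that the cubic graph on $X_0$ has no four-cycle, is again only asserted, and it needs two ingredients. First comes the parity obstruction: nonadjacent $x,y\in X_0$ cannot have three common neighbours in $X_0$. Otherwise \eqref{eq:cubic-gram} puts the other nine in $X_2$, so every common neighbour has $0$ or $2$ neighbours in $\tau$, and summing \eqref{eq:independent-triple} over $\{x,y,s\}$, $s\in\tau$, would make $9$ even. Second, the same disjointness count works into $X_2$: each diagonal of a four-cycle in $X_0$ then has $6+2=8$ common neighbours in $X_2$, hence $36-8=28$ neighbours there, and $28+28>54$. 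With both, $c_4(X_2)=9+c_4(X_0)=9$, and your conclusion follows once existence is secured. The paper needs only the first ingredient. It re-bases the construction at the companion, so a second $K_{3,3}$ in $X_2$, having no edges to the first, would lie in the zero-neighbour class and contradict the parity obstruction. Your label analysis, the bipartiteness of the graph on $X_2$, and the crown with bipartition $(\tau\cup\sigma^\circ,\ \tau^\circ\cup\sigma)$ are correct.
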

\begin{proof}
We first record the two local counting obstructions that make the
four-cycle sufficient.
For opposite vertices $x,y$ of a four-cycle in $X_2$, let
$\nu_{xy}=|\Gamma(x)\cap\Gamma(y)\cap X_2|$ and
$d_{xy}=|\Gamma(x)\cap\Gamma(y)\cap\Delta|$.
Then $2\le\nu_{xy}\le3$ and $d_{xy}\ge1$. For
$Z_{xy}=(\Gamma(x)\cup\Gamma(y))\cap X_0$, the local counts give
\begin{equation}\label{eq:square-expansion}
|Z_{xy}|=26+d_{xy}-\nu_{xy}.
\end{equation}
If $f,g$ are the other opposite vertices, triangle-freeness makes
$Z_{xy}$ and $Z_{fg}$ disjoint. If $\nu_{xy}=2$, their sizes would be
at least $25$ and $24$, contradicting $|X_0|=48$.
Thus every pair in $X_2$ with at least two common neighbours there
has three. Cubicity now completes the four-cycle to a $K_{3,3}$
component of the graph on $X_2$.

Second, there is no $K_{3,3}$ inside $X_0$.
For nonadjacent distinct $x,y\in X_0$, the local common counts give
\[
|\Gamma(x)\cap\Gamma(y)\cap X_1|
+2|\Gamma(x)\cap\Gamma(y)\cap X_0|=6.
\]
If the second common-neighbour count were three, the first would
vanish. All common neighbours would then have an even number of
neighbours in $\tau$ (zero or two). But summing their degrees into $\tau$
counts, for each $s\in\tau$, the three common neighbours of the
independent triple $x,y,s$, and so gives the odd number~$9$.
This contradiction bounds common-neighbour counts inside $X_0$ by~$2$
and excludes a $K_{3,3}$ there.

Two distinct $K_{3,3}$ components inside $X_2$ would be disjoint and
have no edges between them. Relative to either biclique, the other
would lie in its zero-neighbour class, contradicting the preceding
obstruction. This proves uniqueness.
Every vertex of the companion has two neighbours in $\Delta$.
Counting the $12$ cross edges and using the outside-degree bound shows
that every vertex of $\Delta$ also has two neighbours in the companion.
Triangle-freeness makes the two bipartitions compatible. After choosing
their orientations, the union has two independent sides of size~$6$,
and every vertex meets five vertices on the other side. The missing
edges are consequently a perfect matching.
\end{proof}
\leanref{SRG266/Design/Krein/Companion.lean}{exists_unique_biclique_companion}

Call an induced $K_{6,6}$ minus a perfect matching a \emph{crown}.
Every independent triple lies in the crown constructed above, and this
crown is unique. To see uniqueness, an independent triple in any crown
lies in one side: a vertex on one side has only one non-neighbour on
the other, so an independent triple cannot meet both sides.
Its three common neighbours are the opposite side minus the three
matched vertices. The remaining six vertices form a biclique in its
class $X_2$, hence must be its unique companion.

\subsection{Steiner completion and divisibility}
Choose distinct nonadjacent vertices $a,b$ of $K$ and let
$\Omega=\Gamma_K(a)\cap\Gamma_K(b)$. This is an independent set of size~$12$.
For each crown $\mathcal W$ meeting $\Omega$ in four vertices, take
$\mathcal W\cap\Omega$ as a
block, identifying repeated subsets.
Every triple $\tau\subset\Omega$ lies in its unique crown~$\mathcal W$.
In a bipartition of $\mathcal W$, say $\tau$ lies on the left, its common neighbours
are exactly the three unmatched vertices on the right. Both $a,b$
belong to this triple of common neighbours.
Their common neighbours in the crown are the four vertices on the
left other than their two distinct matching partners. Hence
$|\mathcal W\cap\Omega|=4$.
Any other block containing $\tau$ would come from another crown containing
the same independent triple, which is impossible. We have constructed
a Steiner $3$-$(12,4,1)$ design.

Fix a point in such a design. The $\binom{11}{2}=55$ triples containing
that point must be partitioned into the $\binom32=3$ triples contributed
by each block through it. Its integral replication number $r$ would satisfy
$3r=55$. This contradiction proves Theorem~\ref{thm:noqsd}.
\leanref{SRG266/Design/Krein/SteinerCompletion.lean}{no_quasi_symmetric_design}

The intervening $3$-$(57,12,2)$ design was already excluded by
Kaski and {\"O}sterg{\aa}rd \cite[Corollary~2]{KaskiOstergard2008},
combining their classification with the nonextendability computations
of Key and Tonchev \cite{KeyTonchev1997}. Bagchi's earlier claimed
proof \cite{Bagchi1988}, with its subsequent corrigendum, contained a
linear-algebra error; Key and Tonchev discuss the error and credit
Brouwer with finding it. We do not use that argument.
The general nonexistence of the Krein graph with parameters
$(324,57,0,12)$ was proved by Gavrilyuk and Makhnev
\cite{GavrilyukMakhnev2005}.
The purpose of these constructions is to close the required dependency
inside Lean with local moment, parity, and divisibility arguments.

\section{Formal verification and reproducibility}\label{sec:verification}

\subsection{Statement and trusted boundary}
The development uses Lean~4 \cite{deMouraUllrich2021} and Mathlib
\cite{Mathlib2020}. The public theorem in \path{Main.lean} is
\begin{quote}\small\ttfamily
theorem SRG266.nonexistence\\
\hspace*{1em}\{V : Type u\} [Fintype V] (G : SimpleGraph V)\\
\hspace*{1em}[DecidableRel G.Adj] :\\
\hspace*{1em}$\neg$ G.IsSRGWith 266 45 0 9
\end{quote}
Here \code{IsSRGWith} is Mathlib's own definition, not a weakened
project-specific substitute. The type $V$ is arbitrary and universe
polymorphic. The finite-type and decidable-adjacency instances support
finite counting; no graph-theoretic hypothesis is hidden in a typeclass.

The axiom collector for the final theorem reports exactly
\[
\text{\code{propext}},\qquad \text{\code{Classical.choice}},\qquad
\text{\code{Quot.sound}}.
\]
There are no added mathematical axioms, assumed host lists, unproved
design obstructions, or external solver answers in this dependency
closure. Tactics construct proof terms checked by the kernel. The
source audit rejects proof holes and the prohibited external-evaluation
mechanisms, and checks that all $258$ project proof modules are reachable
from \path{Main.lean}. The axiom check also elaborates the public statement
again as an example, so it checks the theorem's type as well as its axioms.

We use \emph{certificate-free} in the following specific sense: the build
does not consume external Hall, Farkas, SAT/LRAT, shell-enumeration, or
similar infeasibility certificates. Small finite arithmetic arguments
remain, for example the four norm values in Lemma~\ref{lem:binary-values}
and the primary discriminant normal forms. Naturally the Lean proof term
itself is a certificate in the general proof-theoretic sense. We do not
claim that finite case distinctions have disappeared.

Kernel checking establishes the formal theorem under the displayed
axioms. It does not independently certify the accuracy of every sentence
in this paper, the historical attribution, or the absence of hardware
and implementation faults. These are separate responsibilities.

\begin{samepage}
\subsection{Archived source and build procedure}
Release \textbf{v2.0.0} of the formalization is archived at
DOI \href{https://doi.org/10.5281/zenodo.22509839}{10.5281/zenodo.22509839}
\cite{Akiyama2026}. Its Git commit is
\begin{center}\small
\path{5fb00e6db138d6df962d71b3b2e1a26ac42a9e59}.
\end{center}
\end{samepage}
The archive contains the proof source; it is not an archive of this
manuscript. The Lean toolchain is pinned to \code{leanprover/lean4:v4.33.1}.
Mathlib is pinned to the following commit, corresponding to its tag
\code{v4.33.1}:
\begin{center}\small
\path{0df444a360eaa60ab8c11dca51a86af692955474}.
\end{center}
After installing elan and obtaining that release, a bounded local build
can be run as follows:
\begin{verbatim}
export LEAN_NUM_THREADS=8
lake exe cache get
python3 scripts/audit.py
lake build
lake env lean scripts/check_axioms.lean
\end{verbatim}
During \code{lake build}, the repository passes \code{-j1} and
\code{-M4096} to project elaboration subprocesses.
Limiting a subprocess to one thread is not by itself a
limit on the number of simultaneous Lake jobs; the
\code{LEAN_NUM_THREADS} environment variable must also be set.
The continuous-integration workflow has one verification job, with a
thread budget of~$2$, and performs the cache retrieval, source audit,
build, and axiom check. It does not distribute project object files
between jobs.

For a Linux system supporting user-systemd memory control, the measured
eight-core, $32$-GiB-budget build used the following form of command,
after the Mathlib cache had been obtained:
\begin{verbatim}
lake clean srg266
systemd-run --user --quiet --wait --pipe \
  --property=MemoryMax=32G --property=MemorySwapMax=0 \
  --working-directory="$PWD" \
  /usr/bin/taskset -c 0-7 /usr/bin/env LEAN_NUM_THREADS=8 \
  python3 scripts/measure.py .lake/measured-build lake build
\end{verbatim}
The CPU numbers should be adjusted to the machine's topology and allowed
CPU set. In the reported run, CPUs $0$--$7$ represented eight distinct
physical cores. Affinity enforced the CPU bound and cgroup memory
control enforced the memory bound; a declared quota without an active
controller is not evidence of enforcement.

\begin{table}[ht]
\centering
\caption{Local verification measurements, 6 September 2026. Project
object files were cleaned for the Lean build; the Mathlib cache was
already present. The nanoda measurement excludes the preceding export.}
\label{tab:measurements}
\begin{tabular}{@{}lll@{}}
\toprule
 & Lean project build & nanoda check\\
\midrule
Wall time & $884.17$ s ($14$ min $44$ s) & $24.47$ s\\
Largest child-process RSS & $3.85$ GiB & $1.81$ GiB\\
Cgroup peak memory & $3.68$ GiB & $1.81$ GiB\\
CPU affinity & CPUs $0$--$7$ & CPUs $0$--$7$\\
Memory / swap limits & $32$ GiB / $0$ & $32$ GiB / $0$\\
OOM events & $0$ & $0$\\
\bottomrule
\end{tabular}
\end{table}

The machine was an Intel Core i9-10900KF desktop with $64$~GB installed
RAM, running Linux~6.8.0-138. The limit was a build budget, not the
observed requirement. Maximum child RSS is a per-process high-water
mark, not aggregate memory; cgroup accounting is a different measurement
and the two values need not coincide. Cache download time and a rebuild
of Mathlib from source are not included. Source hashes were recorded
and checked unchanged by the measurement script; the proof sources of
the archived release agree with this measured source set.

\subsection{Independent checking}
We exported the dependency closure of \code{SRG266.nonexistence} with
\code{lean4export} and checked it with the independent checker nanoda
\cite{Lean4Export,Nanoda}. The check accepted $77{,}402$ declarations
with only the same three permitted axioms and with unsafe axiom
allowances disabled. The configuration \path{scripts/nanoda.json} enables
\code{nat_extension} and \code{string_extension}, so the check uses
nanoda's implementations of the Nat and String literal extensions
supported by the Lean kernel. These extensions belong to the checker's
trusted implementation, rather than adding declared axioms.
The pinned revisions used were
\begin{center}\small
\begin{tabular}{@{}ll@{}}
\code{lean4export} & \path{411dce7db58a3afc60ecab2d211acd1042b593dc}\\
nanoda & \path{05055695879dfebb6628a67da88ceca6cd6b0421}.
\end{tabular}
\end{center}
The exporter's toolchain pin was adjusted to Lean~4.33.1 before building
it. Its export format and the selected nanoda revision were checked for
compatibility. With those executables built, the check takes the form
\begin{verbatim}
LEAN_NUM_THREADS=1 lake env /path/to/lean4export \
  Main -- SRG266.nonexistence > .lake/nonexistence.ndjson
/path/to/nanoda_bin scripts/nanoda.json \
  < .lake/nonexistence.ndjson
\end{verbatim}
The exported stream had SHA-256
\begin{center}\footnotesize
\path{17b9cf9e04de998981a10d1d0a11f854b7b0aebbf3b9a5d59bcf07933d0a94ce}.
\end{center}
This is a reproducibility fingerprint of the measured export, not an
additional mathematical assumption. Independent checking reduces reliance
on a single kernel implementation; it does not certify the exporter,
checker, and hardware as infallible.

\section{Use of large language models}\label{sec:disclosure}

This project is human-directed and LLM-led in mathematical research and
formalization. Kay Akiyama selected the problem and supplied the objective
and continuing instructions, and is the author and maintainer responsible
for the work. The models were used for strategy, mathematical exploration,
formal proof construction, debugging, verification support, and exposition,
not merely language polishing.

The release disclosure attributes the initial mathematical research to
GPT-5.6 Sol and Claude Fable~5, and the initial Lean formalization to
Claude Opus~5 with Claude Code and GPT-5.6 Sol with Codex.
GPT-6 Astra carried out the proof-chain contraction and the formalization work
that eliminated the large external certificates. This included the
certificate-free lattice and design arguments described above.
GPT-5.6 Sol assisted with manuscript preparation and review.
GPT-6 Astra prepared the present exposition using the formal source and
primary literature.

These are workflow-level attributions, not a verified line-by-line
allocation of individual lemmas to particular models. No LLM is listed
as an author. The correctness claim for the formal theorem rests on
proof checking, not on a model's account of its reasoning. The author
remains responsible for the exposition, citations, disclosure, and
submission.

\section{Conclusion}

The obstruction is obtained by preserving a local frame while changing
its ambient geometry. The integral centroid provides the arithmetic
needed for a marked even unimodular completion. A root-isolation gap
and a vanishing harmonic theta series determine its root system.
The first moment fixes the two shell-type counts, while the second
moment bounds or determines the remaining projections. The terminal
contradiction concerns two vectors, rather than a large enumerated shell.
The design branch follows the same principle: compatibility across
local structures replaces separate searches, and a forced completion
ends in an elementary divisibility obstruction.

The result is an end-to-end proof of the nonexistence theorem with no
external infeasibility data. The analytic and structural steps enlarge
the mathematics developed inside Lean, but remove the dependence on
large problem-specific certificates.

\appendix
\section{A map from the argument to the formal source}\label{app:formal-map}

All paths below are relative to the archived repository. This is a map
of the principal interfaces, not a claim that each step occupies only
the displayed file. The general-purpose lattice infrastructure is in
\path{SRG266/Lattice/}; the application-specific assembly is in
\path{SRG266/Completion/}.

{\small
\begin{longtable}{@{}>{\raggedright\arraybackslash}p{0.28\textwidth}
  >{\raggedright\arraybackslash}p{0.67\textwidth}@{}}
\toprule
Mathematical step & Formal entry point\\
\midrule
\endfirsthead
\toprule
Mathematical step & Formal entry point\\
\midrule
\endhead
Local matrix algebra &
\path{SRG266/LocalAlgebra.lean}, \path{SRG266/LocalSpectrum.lean}\\[3pt]
Integral centroid &
\path{SRG266/KernelReduction.lean}, \code{exists_integral_centroid}\\[3pt]
Lorentzian change and frame &
\path{SRG266/Completion/Lift.lean}, \path{SRG266/Completion/Perpendicular.lean}\\[3pt]
Rank-six complement and marked completion &
\path{SRG266/Lattice/RankSixComplements.lean},
\path{SRG266/Completion/Rank24Completion.lean}\\[3pt]
$D_7$ isolation &
\path{SRG266/Lattice/D7FrameIsolation.lean}, \code{d7_root_separation_of_frame}\\[3pt]
Harmonic theta and roots &
\path{SRG266/Lattice/Rank24HarmonicThetaAnalytic.lean},
\path{SRG266/Lattice/LevelOneWeight14.lean},
\path{SRG266/Completion/RootMoment.lean}\\[3pt]
Internal ADE analysis &
\path{SRG266/Lattice/ADEConnectedClassification.lean},
\path{SRG266/Completion/RootLayout.lean}\\[3pt]
Orientation and first moments &
\path{SRG266/Lattice/ShortFrameLabelNormalization.lean},
\path{SRG266/Lattice/A11FirstMomentCounts.lean}\\[3pt]
Complement placement and masses &
\path{SRG266/Completion/ResidualRootChain.lean},
\path{SRG266/Completion/FrameMassConstruction.lean},
\path{SRG266/Completion/Order75FrameMass.lean}\\[3pt]
Trace exclusions &
\path{SRG266/Completion/A5TraceObstruction.lean},
\path{SRG266/Completion/A2A4TraceObstruction.lean}\\[3pt]
Binary endpoint &
\path{SRG266/Completion/Q15ProjectionData.lean},
\path{SRG266/Completion/Q15TraceObstruction.lean}\\[3pt]
Binary factorization and coclique &
\path{SRG266/Completion/TypeAFactorization.lean},
\path{SRG266/NotOneIntegrable.lean}, \path{SRG266/CocliqueDesign.lean}\\[3pt]
Triangular reconstruction &
\path{SRG266/Design/TriangularUniqueness.lean}\\[3pt]
Canonical biplane and extension &
\path{SRG266/Design/CanonicalBiplaneCode.lean},
\path{SRG266/Design/ClosedStarGluing.lean},
\path{SRG266/Design/OnePointExtension57.lean}\\[3pt]
Krein graph and triple moments &
\path{SRG266/Design/Krein/Construction.lean},
\path{SRG266/Design/Krein/TripleMoments.lean}\\[3pt]
Cubic traces and companion &
\path{SRG266/Design/Krein/CubicGram.lean},
\path{SRG266/Design/Krein/FourthTrace.lean},
\path{SRG266/Design/Krein/Companion.lean}\\[3pt]
Steiner obstruction &
\path{SRG266/Design/Krein/SteinerCompletion.lean},
\code{no_quasi_symmetric_design}\\[3pt]
Final statement &
\path{Main.lean}, \code{SRG266.nonexistence}\\
\bottomrule
\end{longtable}
}

\bibliographystyle{amsplain}
\bibliography{references}
\end{document}